\documentclass[a4paper,12pt]{article}
\setlength{\textwidth}{16cm}
\setlength{\textheight}{23cm}
\setlength{\oddsidemargin}{0mm}
\setlength{\topmargin}{-1cm}

\usepackage{latexsym}
\usepackage{amsmath}
\usepackage{amssymb}
\usepackage{enumerate}
\usepackage{bm}

\usepackage{theorem}
\newtheorem{theorem}{Theorem}[section]

\newtheorem{lemma}[theorem]{Lemma}
\newtheorem{corollary}[theorem]{Corollary}

\theorembodyfont{\rmfamily}
\newtheorem{proof}{\textmd{\textit{Proof.}}}

\newtheorem{remark}[theorem]{Remark}

\newtheorem{definition}[theorem]{Definition}

\makeatletter

\@addtoreset{equation}{section}
\makeatother

\newcommand{\qedd}{\hfill \Box}
\newcommand{\ve}{\varepsilon}
\newcommand{\del}{\partial}
\newcommand{\lra}{\longrightarrow}

\newcommand{\N}{\ensuremath{\mathbb{N}}}
\newcommand{\R}{\ensuremath{\mathbb{R}}}

\newcommand{\bS}{\ensuremath{\mathbf{S}}}
\newcommand{\cC}{\ensuremath{\mathcal{C}}}
\newcommand{\cE}{\ensuremath{\mathcal{E}}}
\newcommand{\cL}{\ensuremath{\mathcal{L}}}
\newcommand{\cP}{\ensuremath{\mathcal{P}}}
\newcommand{\cT}{\ensuremath{\mathcal{T}}}
\newcommand{\CD}{\ensuremath{\mathsf{CD}}}

\def\vol{\mathop{\mathrm{vol}}\nolimits}
\def\div{\mathop{\mathrm{div}}\nolimits}

\def\loc{\mathop{\mathrm{loc}}\nolimits}
\def\Ric{\mathop{\mathrm{Ric}}\nolimits}
\def\tr{\mathop{\mathrm{tr}}\nolimits}
\def\Lip{\mathop{\mathrm{Lip}}\nolimits}

\newcommand{\Nabla}{\bm{\nabla}}
\newcommand{\Lap}{\bm{\Delta}}

\title{Bochner-Weitzenb\"ock formula and Li-Yau estimates\\
on Finsler manifolds}
\author{Shin-ichi Ohta\thanks{Department of Mathematics, Kyoto University,
Kyoto 606-8502, Japan ({\sf sohta@math.kyoto-u.ac.jp});
Supported in part by the Grant-in-Aid for Young Scientists (B) 20740036.}
\& Karl-Theodor Sturm\thanks{Institut f\"ur Angewandte Mathematik,
Universit\"at Bonn, Endenicher Allee 60, 53115 Bonn, Germany
({\sf sturm@uni-bonn.de})}}
\date{}
\pagestyle{plain}

\begin{document}

\maketitle

\begin{abstract}
We prove the Bochner-Weitzenb\"ock formula for the (nonlinear) Laplacian
on general Finsler manifolds and derive Li-Yau type gradient estimates
as well as parabolic Harnack inequalities.
Moreover, we deduce Bakry-\'Emery gradient estimates.
All these estimates depend on lower bounds for the weighted flag Ricci tensor.
\end{abstract}

\section{Introduction}

Geometry and analysis on singular spaces is an important topic of current research.
Besides Alexandrov spaces, Finsler manifolds constitute one of the most relevant classes
of explicit examples of metric measure spaces.
Finsler spaces quite often occur naturally via homogenization
as scaling limits of discrete or Riemannian structures.

A \emph{Finsler manifold} is a smooth manifold $M$ equipped with a norm
-- or, more generally, a Minkowski norm -- 
$F(x,\cdot )$ on each tangent space $T_xM$.
The particular case of a Hilbert norm leads to the important subclasses
of Riemannian manifolds.
In our previous paper \cite{OS1}, we introduced and studied in detail
the \emph{heat flow} on a Finsler manifold. It can equivalently be defined
\begin{itemize}
\item
either as gradient flow in $L^2(M,m)$ for the \emph{energy}
$\frac{1}{2} \int_M F(\Nabla u)^2 \,dm$;
\item
or as gradient flow in the  $L^2$-Wasserstein space $(\cP_2(M),W_2)$
for the \emph{relative entropy} $\int_M u\log u \,dm$.
\end{itemize}
For this \emph{nonlinear} evolution semigroup, we proved
$\cC^{1,\alpha}$-regularity and  $L^p$-contraction estimates
as well as integrated upper Gaussian estimates \`a la Davies (\cite[Sections~3, 4]{OS1}).
\bigskip

Various fine properties of the heat flow are intimately linked with
the \emph{weighted flag Ricci tensor} $\Ric_N$ (with $N \in [\dim M,\infty]$)
introduced in \cite{Oint}.
In \cite{Oint}, the first author proved that the curvature-dimension condition
$\CD(K,N)$ in the sense of Lott-Sturm-Villani for a weighted Finsler space
$(M,F,m)$ is equivalent to a lower bound $K$ for $\Ric_N$.
In particular, the relative entropy -- regarded as a function on the
$L^2$-Wasserstein space -- is $K$-convex if and only if
$\Ric_{\infty}$ is bounded from below by $K$.
In \cite[Theorem~5.6]{OS1}, we proved that lower bounds for the weighted flag Ricci curvature imply
point-wise comparison results \`a la Cheeger-Yau.
Surprisingly enough, however, in \cite{OS2} we  observed that exponential
contraction/expansion bounds in Wasserstein distance
\begin{equation}\label{eq:intr1}
W_2(P_t \mu, P_t \nu)\le e^{Ct} W_2(\mu,\nu)
\end{equation}
hold true if and only if we are in a Riemannian setting.
\bigskip

The main goal of this paper now is to derive the \emph{Bochner-Weitzenb\"ock formula}
(Theorems~\ref{th:pwBW}, \ref{th:intBW})
\begin{equation}\label{eq:intr2}
\Delta^{\Nabla u} \bigg( \frac{F(\Nabla u)^2}{2} \bigg) -D(\Lap u)(\Nabla u)
 =\Ric_{\infty}(\Nabla u) +\| \Nabla^2 u \|_{HS(\Nabla u)}^2.
\end{equation}
Here $\Delta^{\Nabla u}$ denotes the linearization of the Laplacian $\Lap$ on $M$
in direction $\Nabla u$.
The formula \eqref{eq:intr2} a priori holds true only on the set
$\{ x \in M \,|\, \Nabla u(x) \neq 0\}$.
It extends to an identity in distributional sense on entire $M$.

Formulation and proof of \eqref{eq:intr2} -- as well as applications of it --
require lot of care due to the lack of regularity.
For instance, solutions to the heat equation $\Lap u=\del u/\del t$
on $M$ will be $\cC^2$ \emph{only if} $M$ is Riemannian (\cite{OS1}).
As a first striking application we deduce the \emph{Bakry-\'Emery gradient estimate}
(Theorem~\ref{th:BE})
\begin{equation}\label{eq:intr3}
F\big( \Nabla P_tu(x) \big)^2 \le
 e^{-2Kt} P_{0,t}^{\Nabla u} \big( F(\Nabla u)^2 \big)(x).
\end{equation}
Here $P_t$ denotes the (nonlinear) semigroup generated by the Laplacian
$\Lap$, whereas $P^{\Nabla u}_{0,t}$ denotes
the (linear, symmetric) Markov transition operator on $L^2(M,m)$
with time-dependent generator $L_s$, $0<s<t$, obtained by linearization of $\Lap$
in direction $\Nabla u(s,\cdot)$.
An immediate consequence of \eqref{eq:intr3} is the growth bound for
Lipschitz constants
\begin{equation}\label{eq:intr4}
\Lip(P_tu) \le e^{-Kt} \Lip(u).
\end{equation}
Note that in the Riemannian setting (or more abstract linear frameworks),
according to the Kantorovich-Rubinstein duality,
the bounds \eqref{eq:intr1} and \eqref{eq:intr4} are equivalent to each other
(with $C=-K$).
\medskip

Finally, we prove the \emph{Li-Yau gradient estimate} (Theorem~\ref{th:LY})
for positive solutions to the heat equation on Finsler spaces which
in the case $\Ric_N\ge0$ simply reads as
\[ F\big( \Nabla (\log u)(t,x) \big)^2 - \del_t(\log u)(t,x)
 \le \frac{N}{2t}, \]
and deduce as a corollary the famous \emph{Li-Yau type Harnack inequality}
(Theorem~\ref{th:LYH})
\[ u(s,x) \le u(t,y) \cdot \bigg( \frac{t}{s} \bigg)^{N/2}
 \exp\bigg( \frac{d(y,x)^2}{4(t-s)} \bigg) \]
for any  $0<s<t$ and $x,y \in M$.

For \eqref{eq:intr3} and the subsequent estimates, $M$ is required to be compact.
It is an open question whether this compactness assumption can be replaced
by completeness.

\section{Finsler geometry}\label{sc:prel}

We briefly review the fundamentals of Finsler geometry,
for which we refer to \cite{BCS} and \cite{Shlec},
as well as some results from \cite{Oint} and \cite{OS1}.

\subsection{Finsler manifolds}

Let $M$ be a connected, $n$-dimensional $\cC^{\infty}$-manifold without boundary.
Given a local coordinate $(x^i)_{i=1}^n$ on an open set $U \subset M$,
we will always use the coordinate $(x^i,v^j)_{i,j=1}^n$ of $TU$ such that
\[ v=\sum_{j=1}^n v^j \frac{\del}{\del x^j}\Big|_x \in T_xM, \qquad x \in U. \]

\begin{definition}[Finsler structures]\label{df:Fstr}
We say that a nonnegative function $F:TM \lra [0,\infty)$ is
a \emph{$\cC^{\infty}$-Finsler structure} of $M$ if the following three conditions hold:
\begin{enumerate}[(1)]
\item(Regularity)
$F$ is $\cC^{\infty}$ on $TM \setminus 0$,
where $0$ stands for the zero section.

\item(Positive $1$-homogeneity)
It holds $F(cv)=cF(v)$ for all $v \in TM$ and $c \ge 0$.

\item(Strong convexity)
The $n \times n$ matrix
\begin{equation}\label{eq:gij}
\big( g_{ij}(v) \big)_{i,j=1}^n :=
 \bigg( \frac{1}{2}\frac{\del^2 (F^2)}{\del v^i \del v^j}(v) \bigg)_{i,j=1}^n
\end{equation}
is positive-definite for all $v \in T_xM \setminus 0$.
\end{enumerate}
We call such a pair $(M,F)$ a \emph{$\cC^{\infty}$-Finsler manifold}.
\end{definition}

In other words, $F$ provides a Minkowski norm on each tangent space
which varies smoothly in the horizontal direction.
For $x,y \in M$, we define the distance from $x$ to $y$ in a natural way by
\[ d(x,y):=\inf_{\eta} \int_0^1 F\big( \dot{\eta}(t) \big) \,dt, \]
where the infimum is taken over all $\cC^1$-curves $\eta:[0,1] \lra M$
such that $\eta(0)=x$ and $\eta(1)=y$.
Note that $d(y,x) \neq d(x,y)$ may happen since $F$ is only positively homogeneous.
A $\cC^{\infty}$-curve $\eta$ on $M$ is called a \emph{geodesic}
if it is locally minimizing and has a constant speed
(i.e., $F(\dot{\eta})$ is constant).
See \eqref{eq:geod} below for the precise geodesic equation.
For $v \in T_xM$, if there is a geodesic $\eta:[0,1] \lra M$
with $\dot{\eta}(0)=v$, then we define the \emph{exponential map}
by $\exp_x(v):=\eta(1)$.
We say that $(M,F)$ is \emph{forward complete} if the exponential
map is defined on whole $TM$.
Then the Hopf-Rinow theorem ensures that any pair of points
is connected by a minimal geodesic (see \cite[Theorem~6.6.1]{BCS}).

For each $v \in T_xM \setminus 0$, the positive-definite matrix
$(g_{ij}(v))_{i,j=1}^n$ in \eqref{eq:gij} induces
the Riemannian structure $g_v$ of $T_xM$ via
\begin{equation}\label{eq:gv}
g_v\bigg( \sum_{i=1}^n a_i \frac{\del}{\del x^i}\Big|_x,
 \sum_{j=1}^n b_j \frac{\del}{\del x^j}\Big|_x \bigg)
 := \sum_{i,j=1}^n g_{ij}(v) a_i b_j.
\end{equation}
This is regarded as the best Riemannian approximation of $F|_{T_xM}$
in the direction $v$.
In fact, the unit sphere of $g_v$ is tangent to that of $F|_{T_xM}$
at $v/F(v)$ up to the second order.
In particular, we have $g_v(v,v)=F(v)^2$.
The \emph{Cartan tensor}
\[ A_{ijk}(v):=\frac{F(v)}{2} \frac{\del g_{ij}}{\del v^k}(v),
 \qquad v \in TM \setminus 0, \]
is a quantity appearing only in the Finsler context.
Indeed, $A_{ijk}$ vanishes everywhere on $TM \setminus 0$
if and only if $F$ comes from a Riemannian metric.
We will repeatedly use the next useful fact on homogeneous functions.

\begin{theorem}{\rm (see \cite[Theorem~1.2.1]{BCS})}\label{th:Euler}
Suppose that a differentiable function $H:\R^n \setminus 0 \lra \R$ satisfies
$H(cv)=c^r H(v)$ for some $r \in \R$ and all $c>0$ and $v \in \R^n \setminus 0$
$($in other words, $H$ is \emph{positively $r$-homogeneous}$)$.
Then we have
\[ \sum_{i=1}^n \frac{\del H}{\del v^i}(v)v^i=rH(v) \]
for all $v \in \R^n \setminus 0$.
\end{theorem}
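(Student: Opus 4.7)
The plan is to use the classical one-line argument: differentiate the homogeneity identity $H(cv)=c^r H(v)$ with respect to the scalar parameter $c$ and then evaluate at $c=1$.

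More precisely, I would fix an arbitrary $v \in \R^n \setminus 0$ and consider the auxiliary function $\phi:(0,\infty)\to\R$ defined by $\phi(c):=H(cv)$. On the one hand, since $H$ is differentiable on $\R^n\setminus 0$ and the map $c\mapsto cv$ is smooth with values in $\R^n\setminus 0$ (because $v\neq 0$ and $c>0$), the chain rule gives
\[ \phi'(c)=\sum_{i=1}^n \frac{\del H}{\del v^i}(cv)\cdot v^i. \]
On the other hand, the hypothesis $H(cv)=c^r H(v)$ tells us that $\phi(c)=c^r H(v)$, which is a smooth function of $c$ whose derivative is
\[ \phi'(c)=rc^{r-1}H(v). \]
Equating the two expressions for $\phi'(c)$ and specializing to $c=1$ then yields
\[ \sum_{i=1}^n \frac{\del H}{\del v^i}(v)\cdot v^i = rH(v), \]
which is the claim.

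There is essentially no obstacle here; the only minor thing to check is that $H$ being differentiable at $v$ (as hypothesized) together with the smoothness of the dilation $c\mapsto cv$ is enough to justify the chain rule used for $\phi'$. Strictly speaking, one needs $H$ to be differentiable at every point $cv$ with $c>0$, but this is immediate from the assumption that $H$ is differentiable on all of $\R^n\setminus 0$. No regularity or convergence issues arise, and the positive homogeneity (rather than full homogeneity under all scalars) is exactly what lets us restrict to $c>0$ without worrying about the sign.
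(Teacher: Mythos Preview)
Your argument is correct and is exactly the standard proof of Euler's homogeneous function theorem. The paper itself does not supply a proof of this statement; it merely cites \cite[Theorem~1.2.1]{BCS}, so there is nothing further to compare.
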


Observe that $g_{ij}$ is positively $0$-homogeneous on each $T_xM$, and hence
\begin{equation}\label{eq:Av}
\sum_{i=1}^n A_{ijk}(v)v^i =\sum_{j=1}^n A_{ijk}(v)v^j
 =\sum_{k=1}^n A_{ijk}(v)v^k =0
\end{equation}
for all $v \in TM \setminus 0$ and $i,j,k=1,2,\ldots,n$.
Define the \emph{formal Christoffel symbol}
\[ \gamma^i_{jk}(v):=\frac{1}{2}\sum_{l=1}^n g^{il}(v) \bigg\{
 \frac{\del g_{jl}}{\del x^k}(v) +\frac{\del g_{lk}}{\del x^j}(v)
 -\frac{\del g_{jk}}{\del x^l}(v) \bigg\} \]
for $v \in TM \setminus 0$,
where $(g^{ij})$ stands for the inverse matrix of $(g_{ij})$.
We also introduce the \emph{geodesic spray coefficients}
and the \emph{nonlinear connection}
\[ G^i(v):=\sum_{j,k=1}^n \gamma^i_{jk}(v) v^j v^k, \qquad
 N^i_j(v):=\frac{1}{2} \frac{\del G^i}{\del v^j}(v) \]
for $v \in TM \setminus 0$, and $G^i(0)=N^i_j(0):=0$ by convention.
Note that $G^i$ is positively $2$-homogeneous, so that
Theorem~\ref{th:Euler} implies $\sum_{j=1}^n N^i_j(v) v^j=G^i(v)$.

By using the nonlinear connections $N^i_j$,
the coefficients of the \emph{Chern connection} is given by
\begin{equation}\label{eq:Gamma}
\Gamma^i_{jk}:=\gamma^i_{jk}
 -\sum_{l,m=1}^n \frac{g^{il}}{F}(A_{jlm}N^m_k +A_{lkm}N^m_j -A_{jkm}N^m_l)
\end{equation}
on $TM \setminus 0$.
Namely, the corresponding \emph{covariant derivative} of a vector field
$X=\sum_{i=1}^n X^i (\del/\del x^i)$ by $v \in T_xM$
with \emph{reference vector} $w \in T_xM \setminus 0$ is defined by
\begin{equation}\label{eq:covd}
D_v^w X(x):=\sum_{i,j=1}^n \bigg\{ v^j \frac{\del X^i}{\del x^j}(x)
 +\sum_{k=1}^n \Gamma^i_{jk}(w) v^j X^k(x) \bigg\} \frac{\del}{\del x^i}\Big|_x.
\end{equation}
Then the \emph{geodesic equation} is written as, with the help of \eqref{eq:Av},
\begin{equation}\label{eq:geod}
D_{\dot{\eta}}^{\dot{\eta}} \dot{\eta}(t)
 =\sum_{i=1}^n \big\{ \ddot{\eta}^i(t) +G^i\big( \dot{\eta}(t) \big) \big\}
 \frac{\del}{\del x^i} \Big|_{\eta(t)} =0.
\end{equation}

\subsection{Nonlinear Laplacian and the associated heat flow}

Let us denote by $\cL^*:T^*M \lra TM$ the \emph{Legendre transform}.
Precisely, $\cL^*$ is sending $\alpha \in T_x^*M$ to the unique element $v \in T_xM$
such that $\alpha(v)=F^*(\alpha)^2$ and $F(v)=F^*(\alpha)$,
where $F^*$ stands for the dual norm of $F$.
Note that $\cL^*|_{T^*_xM}$ becomes a linear operator only when $F|_{T_xM}$
is an inner product.
For a differentiable function $u:M \lra \R$, the \emph{gradient vector}
of $u$ at $x$ is defined as the Legendre transform of the derivative of $u$,
$\Nabla u(x):=\cL^*(Du(x)) \in T_xM$.
If $Du(x)=0$, then clearly $\Nabla u(x)=0$.
If $Du(x) \neq 0$, then we can write in coordinates
\[ \Nabla u
 =\sum_{i,j=1}^n g^{ij}(\Nabla u) \frac{\del u}{\del x^j} \frac{\del}{\del x^i}. \]
We must be careful when $Du(x)=0$,
because $g_{ij}(\Nabla u(x))$ is not defined as well as
the Legendre transform $\cL^*$ being only continuous at the zero section.
For later convenience we set in general
\begin{equation}\label{eq:M_V}
M_V:=\{ x \in M \,|\, V(x) \neq 0 \}
\end{equation}
for a vector field $V$ on $M$, and $M_u:=M_{\Nabla u}$.
For a differentiable vector field $V$ on $M$ and $x \in M_V$,
we define $\Nabla V(x) \in T_x^*M \otimes T_xM$
by using the covariant derivative \eqref{eq:covd} as
\begin{equation}\label{eq:DV}
\Nabla V(v):=D^V_v V(x) \in T_xM, \qquad v \in T_xM.
\end{equation}
We also set $\Nabla^2 u(x):=\Nabla(\Nabla u)(x)$
for a twice differentiable function $u:M \lra \R$ and $x \in M_u$.

\begin{lemma}\label{lm:D2u}
$\Nabla^2 u(x)$ is symmetric in the sense that
$g_{\Nabla u}(\Nabla^2 u(v),w)=g_{\Nabla u}(\Nabla^2 u(w),v)$
for all $v,w \in T_xM$.
\end{lemma}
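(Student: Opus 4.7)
The plan is to extend $v,w \in T_xM$ to smooth vector fields $\tilde v, \tilde w$ in a neighborhood of $x$ in $M_u$ and reduce the claimed symmetry to the equality of mixed second derivatives of $u$. Writing $V := \Nabla u$, the defining property of the Legendre transform gives $g_V(V, Y) = Du(Y) = Y[u]$ for every tangent vector (field) $Y$; this is the bridge that turns $g_V(D_{\tilde v}^V V, \tilde w)$ into a computable quantity via derivatives of $u$ and the covariant derivatives $D^V$.

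The key step is to prove the identity
\[
\tilde v \tilde w[u] = g_V\bigl(D_{\tilde v}^V V, \tilde w\bigr) + \bigl(D_{\tilde v}^V \tilde w\bigr)[u] \qquad \text{on } M_u,
\]
by differentiating $\tilde w[u] = g_V(V, \tilde w)$ in the direction $\tilde v$. This requires an almost-metric-compatibility statement for the Chern connection with reference $V$: a direct coordinate computation starting from \eqref{eq:Gamma}, combined with the formal Christoffel identity $\gamma_{ijk}(v) + \gamma_{jik}(v) = \partial g_{ij}/\partial x^k(v)$ (where $\gamma_{ijk} := g_{il}\gamma^l_{jk}$), yields the general formula
\[
\tilde v[g_V(Y, Z)] - g_V(D_{\tilde v}^V Y, Z) - g_V(Y, D_{\tilde v}^V Z) = \frac{2}{F(V)} A_{ijl}(V)\, \tilde v^k Y^i Z^j\, \bigl(\partial_k V^l + N^l_k(V)\bigr).
\]
Specializing to $Y = V$, the full symmetry of the Cartan tensor $A_{ijk}$ together with \eqref{eq:Av} force $A_{ijl}(V) V^i = 0$, so the Cartan correction drops out; combined with $g_V(V, D_{\tilde v}^V \tilde w) = (D_{\tilde v}^V \tilde w)[u]$, the displayed identity follows.

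The conclusion is then formal. Swapping $\tilde v$ and $\tilde w$ in the displayed identity and subtracting gives
\[
[\tilde v, \tilde w][u] = g_V(D_{\tilde v}^V V, \tilde w) - g_V(D_{\tilde w}^V V, \tilde v) + \bigl(D_{\tilde v}^V \tilde w - D_{\tilde w}^V \tilde v\bigr)[u],
\]
and the torsion-freeness of the Chern connection (visible from the symmetry of $\Gamma^i_{jk}(v)$ in $j,k$ in \eqref{eq:Gamma}) yields $D_{\tilde v}^V \tilde w - D_{\tilde w}^V \tilde v = [\tilde v, \tilde w]$, so the last term also equals $[\tilde v, \tilde w][u]$ and cancels the left-hand side. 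Evaluating at $x$ leaves exactly $g_{\Nabla u}(\Nabla^2 u(v), w) = g_{\Nabla u}(\Nabla^2 u(w), v)$. The principal obstacle is the bookkeeping in the coordinate verification of the almost-metric-compatibility formula above: one must carefully track how the total $x$-derivative of $g_{ij}(V(x))$ splits into a horizontal part $\partial g_{ij}/\partial x^k(V)$ and a vertical part proportional to $\partial_k V^l$, and then harvest the cancellations built into \eqref{eq:Gamma} via the identities \eqref{eq:Av}; once this is done, the symmetry reduces to the interplay between the Legendre duality $Du = g_V(V,\cdot)$ and the torsion-free property.
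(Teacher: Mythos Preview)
Your argument is correct but takes a genuinely different route from the paper. The paper's proof is a one--line coordinate computation: in coordinates orthonormal at $x$ for $g_{\Nabla u(x)}$ it writes out $\Nabla^2 u(\del/\del x^i)$ directly and obtains the component expression $\del^2 u/\del x^i\del x^j - \sum_k \Gamma^k_{ij}(\Nabla u)\,\del u/\del x^k$, which is manifestly symmetric in $i,j$ by the commutativity of partial derivatives and the symmetry $\Gamma^k_{ij}=\Gamma^k_{ji}$. Your approach is instead structural: you differentiate the Legendre identity $g_V(V,\cdot)=Du$, invoke the almost--metric--compatibility of the Chern connection (with the Cartan correction killed by \eqref{eq:Av} when one slot is $V$), and then use torsion--freeness to cancel the commutator term. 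The paper's method is shorter and produces an explicit coordinate formula for $\Nabla^2 u$ (which the paper in fact reuses later, e.g.\ in the proof of Lemma~\ref{lm:B(t)}(ii)); your method is coordinate--free and makes transparent \emph{why} the symmetry holds---it is exactly the usual torsion--free Hessian symmetry, with the Finsler complication absorbed by the vanishing $A_{ijl}(V)V^i=0$. Both are sound; the only caveat is that your almost--compatibility identity with the $N^l_k$ term is not stated in the paper, so if you write this up you should either derive it from \eqref{eq:Gamma} as you sketch, or cite the standard horizontal compatibility $\delta g_{ij}/\delta x^k = g_{il}\Gamma^l_{jk}+g_{jl}\Gamma^l_{ik}$ of the Chern connection.
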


\begin{proof}
This can be checked by hand.
Indeed, choosing a coordinate $\{ (\del/\del x^i)|_x \}_{i=1}^n$
orthonormal with respect to $g_{\Nabla u(x)}$, we see by calculation
\begin{align*}
\Nabla^2 u \bigg( \frac{\del}{\del x^i} \bigg)
&=D^{\Nabla u}_{\del/\del x^i} \bigg( \sum_{j,k=1}^n g^{jk}\big( \Nabla u(x) \big)
 \frac{\del u}{\del x^k}(x) \frac{\del}{\del x^j} \bigg) \\
&= \sum_{j=1}^n \bigg\{ \frac{\del^2 u}{\del x^i \del x^j}(x)
 -\sum_{k=1}^n \Gamma^k_{ij}\big( \Nabla u(x) \big) \frac{\del u}{\del x^k}(x) \bigg\}
 \frac{\del}{\del x^j}.
\end{align*}
$\qedd$
\end{proof}

From now on, we fix an arbitrary positive $\cC^{\infty}$-measure $m$ on $M$
as our base measure.
Define the \emph{divergence} of a differentiable vector field $V$ on $M$
with respect to $m$ by
\[ \div_m V:=\sum_{i=1}^n \bigg( \frac{\del V_i}{\del x^i} +V_i \frac{\del \Phi}{\del x^i} \bigg), \]
where we decompose $m$ in coordinates as $dm=e^{\Phi} \,dx^1 dx^2 \cdots dx^n$.
This can be rewritten (and extended to weakly differentiable vector fields) in the weak form as
\[ \int_M \phi \div_m V \,dm =-\int_M D\phi(V) \,dm \]
for all $\phi \in \cC_c^{\infty}(M)$.
Then we define the distributional \emph{Laplacian} of $u \in H^1_{\loc}(M)$ by
$\Lap u:=\div_m(\Nabla u)$ in the weak sense that
\[ \int_M \phi\Lap u \,dm:=-\int_M D\phi(\Nabla u) \,dm \]
for $\phi \in \cC_c^{\infty}(M)$.
We remark that $H^1_{\loc}(M)$ (and $L^2_{\loc}(M)$ etc.)
is defined solely in terms of the differentiable structure of $M$.

Given a vector field $V$ such that $V \neq 0$ on $M_u$,
we can define the gradient vector and the Laplacian
on the weighted Riemannian manifold $(M,g_V,m)$ by
\[ \nabla^V u:=\left\{ \begin{array}{ll}
 \displaystyle\sum_{i,j=1}^n g^{ij}(V) \frac{\del u}{\del x^j} \frac{\del}{\del x^i} & \textrm{on } M_u, \smallskip\\
 0 & \textrm{on } M \setminus M_u, \end{array}\right.
 \qquad \Delta^V u:=\div_m(\nabla^V u), \]
where the latter should be read in the sense of distributions.
Note that, in the definition of $\Delta^V u$, we used the divergence with respect to $m$
rather than the volume form of $g_V$.
It is not difficult to see $\nabla^{\Nabla u}u=\Nabla u$ and $\Delta^{\Nabla u}u=\Lap u$
(\cite[Lemma~2.4]{OS1}).
We also observe for later use that, given $u, f_1$ and $f_2$,
\begin{equation}\label{eq:f1f2}
Df_2(\nabla^{\Nabla u}f_1) =g_{\Nabla u}(\nabla^{\Nabla u}f_1,\nabla^{\Nabla u}f_2)
 =Df_1(\nabla^{\Nabla u}f_2).
\end{equation}

In \cite{OS1}, we have studied the associated \emph{nonlinear heat equation}
$\del u/\del t=\Lap u$.
The nonlinearity is inherited from the Legendre transform.
Given an open set $\Omega \subset M$, define the \emph{Dirichlet energy}
of $u \in H_{\loc}^1(\Omega)$ by
\[ \cE_{\Omega}(u):=\frac{1}{2}\int_{\Omega} F(\Nabla u)^2 \,dm. \]
We will suppress $\Omega$ if $\Omega=M$, namely $\cE=\cE_M$.
Set
\[ H^1(\Omega):=\{ u \in H^1_{\loc}(\Omega) \cap L^2(\Omega,m)
 \,|\, \cE_{\Omega}(u)<\infty \}, \]
and let $H^1_0(\Omega)$ be the closure of $\cC_c^{\infty}(\Omega)$
with respect to the (Minkowski) norm
$\|u\|_{H^1(\Omega)}:=\|u\|_{L^2(\Omega)}+\cE_{\Omega}(u)^{1/2}$.

\begin{definition}[Global and local solutions]\label{df:hf}
\begin{enumerate}[(1)]
\item
For $T>0$, we say that a function $u$ on $[0,T] \times M$
is a \emph{global solution} to the heat equation if
$u \in L^2([0,T],H^1_0(M)) \cap H^1([0,T],H^{-1}(M))$ and if
\[ \int_M \phi \frac{\del u_t}{\del t} \,dm =-\int_M D\phi(\Nabla u_t) \,dm \]
holds for all $t \in [0,T]$ and $\phi \in \cC_c^{\infty}(M)$, where we set $u_t:=u(t,\cdot)$.

\item
Given an open interval $I \subset \R$ and an open set $\Omega \subset M$,
we say that a function $u$ on $I \times \Omega$ is a \emph{local solution}
to the heat equation on $I \times \Omega$ if $u \in L^2_{\loc}(I \times \Omega)$,
$F(\Nabla u) \in L^2_{\loc}(I \times \Omega)$ and if
\[ \int_I \int_{\Omega} u \frac{\del\phi}{\del t} \,dmdt
 =\int_I \int_{\Omega} D\phi(\Nabla u) \,dmdt \]
holds for all $\phi \in \cC_c^{\infty}(I \times \Omega)$.
\end{enumerate}
\end{definition}

Global solutions can be constructed as the gradient flow of the energy functional
$\cE$ in $L^2(M,m)$.
We summarize the existence and regularity properties established in \cite{OS1}
in the next theorem.

\begin{theorem}{\rm (\cite[Sections~3,4]{OS1})}\label{th:hf}
\begin{enumerate}[{\rm (i)}]
\item For each $u_0 \in H^1_0(M)$ and $T>0$,
there exists a global solution $u$ to the heat equation on $[0,T] \times M$,
and the distributional Laplacian $\Lap u_t$ is absolutely continuous
with respect to $m$ for all $t \in (0,T)$.
If $M$ is compact, then such a global solution is uniquely determined
by its initial datum $u_0$.

\item Given an open set $\Omega \subset M$,
the continuous version of any local solution $u$ to the heat equation on $\Omega$ enjoys
the $H^2_{\loc}$-regularity in $x$ as well as the $\cC^{1,\alpha}$-regularity in both $t$ and $x$.
Furthermore, the distributional time derivative $\del_t u$ lies in
$H^1_{\loc}(\Omega) \cap \cC(\Omega)$.
\end{enumerate}
\end{theorem}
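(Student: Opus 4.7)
The plan is to follow the program of the authors' earlier paper \cite{OS1}: construct global solutions via abstract Hilbert-space gradient-flow theory applied to the Dirichlet energy $\cE$ on $L^2(M,m)$, and then upgrade the resulting Hilbert-space regularity to classical PDE regularity by exploiting that the linearized operator $\Delta^{\Nabla u}$ is uniformly elliptic on the open set $M_u=\{\Nabla u\neq 0\}$.

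For part (i) I would first verify that $\cE\colon L^2(M,m)\to[0,\infty]$, with proper domain $H^1_0(M)$, is convex and lower semicontinuous. Convexity reduces to the pointwise convexity of $v\mapsto F(v)^2/2$ on each tangent space, which follows from the strong convexity condition in Definition~\ref{df:Fstr}; lower semicontinuity follows from convexity together with the closability of the distributional gradient. The Brezis--Komura theory of gradient flows of convex lower semicontinuous functionals then produces, for every $u_0\in H^1_0(M)$, a locally absolutely continuous curve $t\mapsto u_t$ in $L^2(M,m)$ whose derivative is the minimal-norm element of $-\partial\cE(u_t)$ for a.e.\ $t>0$. A direct computation of the first variation of $\cE$ identifies this minimal-norm element with $\Lap u_t$ in the weak sense required by Definition~\ref{df:hf}, and the instantaneous $L^2$-smoothing of such gradient flows forces $\Lap u_t\in L^2(M,m)$, hence absolute continuity with respect to $m$, for every $t>0$. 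Uniqueness on compact $M$ finally follows from the $L^2$-contraction $\|u_t-v_t\|_{L^2}\le\|u_0-v_0\|_{L^2}$, itself a consequence of monotonicity of $\partial\cE$.

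For part (ii) the argument is local and proceeds by elliptic/parabolic regularity for the frozen-coefficient operator $\Delta^{\Nabla u}$ on the weighted Riemannian manifold $(M,g_{\Nabla u},m)$. On $M_u$ this operator is uniformly elliptic with measurable coefficients, so Nash--De Giorgi--Moser theory gives $\cC^{\alpha}$-regularity of $u$ in $x$; a difference-quotient argument then yields the $H^2_{\loc}$-bound, and bootstrapping with the smooth dependence of $g_{ij}(v)$ on $v\in TM\setminus 0$ promotes this to $\cC^{1,\alpha}$ in $(t,x)$. Across the critical set $M\setminus M_u$ one uses that $F^2$ is of class $\cC^1$ at the zero section (even though $F$ itself is not), which is enough to glue the interior regularity on $M_u$ to a global $\cC^{1,\alpha}$ statement. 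Finally, differentiating the weak equation in time and testing against $\partial_t u$ itself gives $\partial_t u\in H^1_{\loc}$, and the established spatial $\cC^{1,\alpha}$-control promotes this to continuity.

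The main obstacle is the loss of $\cC^2$-regularity of $F^2$ at the zero section: this precludes a direct Schauder bootstrap to higher regularity and is precisely why the assertion in (ii) stops at $\cC^{1,\alpha}$ and $H^2_{\loc}$. It is also the source of the delicate separate analysis of $\{\Nabla u=0\}$ and the reason, as noted in the introduction, why $\cC^2$-solutions cannot be expected in the genuinely non-Riemannian Finsler regime.
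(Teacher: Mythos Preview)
This theorem is not proved in the present paper at all; it is quoted from \cite{OS1} (note the attribution in the theorem heading and the remark immediately after about the hypothesis \cite[(4.4)]{OS1}). So there is no in-paper proof to compare against, only the cited source.

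Your outline for part (i) is faithful to what \cite{OS1} does: realize the flow as the $L^2$-gradient flow of the convex, lower-semicontinuous energy $\cE$ via Brezis--Komura theory, identify the minimal-norm subgradient with $\Lap u_t$ (yielding $\Lap u_t\in L^2$ and hence absolute continuity), and obtain uniqueness from monotonicity/$L^2$-contraction. That part is fine.

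For part (ii), the ingredients you list are the right ones but the architecture has a genuine gap. You propose to freeze coefficients, work with the linear operator $\Delta^{\Nabla u}$ on the open set $M_u$, obtain interior regularity there, and then ``glue'' across $M\setminus M_u$ using that $F^2\in\cC^1$ at the origin. This is circular: before any regularity is established, $\Nabla u$ is only an $L^2$ vector field, so $M_u$ is merely a measurable set, and the coefficients $g^{ij}(\Nabla u)$ of your frozen operator are not known to have any structure beyond boundedness. You cannot get the argument off the ground on $M_u$ alone, and the gluing step is not a well-defined operation at this level of regularity. The route actually taken in \cite{OS1} treats $\del_t u=\div_m\big(\cL^*(Du)\big)$ \emph{globally} as a uniformly parabolic quasilinear equation in divergence form: the key structural fact is that $g_{ij}$ (equivalently $g^{ij}$) is positively $0$-homogeneous, hence uniformly bounded above and below on compact coordinate patches, so the equation is uniformly parabolic everywhere, including across the zero section of the gradient. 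The quasilinear De Giorgi--Nash--Moser and Ladyzhenskaya--Solonnikov--Ural'tseva / DiBenedetto--type theory then gives $\cC^{\alpha}$ and subsequently $\cC^{1,\alpha}$ directly on all of $\Omega$, with no decomposition into $M_u$ and its complement. The $H^2_{\loc}$ bound and $\del_t u\in H^1_{\loc}\cap\cC$ follow by the difference-quotient and time-differentiation arguments you mention, but again applied to the global quasilinear equation rather than to a linearization on a subdomain.
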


We remark that the mild smoothness assumption \cite[(4.4)]{OS1}
clearly holds true for our $\cC^{\infty}$-smooth $F$ and $m$.

\subsection{Ricci curvature}

The \emph{Ricci curvature} (as the trace of the \emph{flag curvature}) on a Finsler manifold
is defined by using the Chern connection (and is in fact independent of the choice of connection).
Instead of giving a precise definition in coordinates,
here we explain a useful interpretation due to Shen \cite[\S 6.2]{Shlec}.
Given a unit vector $v \in T_xM$ (i.e., $F(v)=1$), we extend it to a $\cC^{\infty}$-vector field $V$
on a neighborhood of $x$ in such a way that every integral curve of $V$ is geodesic,
and consider the Riemannian structure $g_V$ induced from \eqref{eq:gv}.
Then the Ricci curvature $\Ric(v)$ of $v$ with respect to $F$ coincides with
the Ricci curvature of $v$ with respect to $g_V$
(in particular, it is independent of the choice of $V$).

Inspired by the above interpretation of the Ricci curvature,
the \emph{weighted Ricci curvature} for $(M,F,m)$ is introduced in \cite{Oint} as follows.

\begin{definition}[Weighted Ricci curvature]\label{df:wRic}
Given a unit vector $v \in T_xM$, let $\eta:(-\ve,\ve) \lra M$ be the geodesic
such that $\dot{\eta}(0)=v$.
We decompose $m$ as $m=e^{-\Psi}\vol_{\dot{\eta}}$ along $\eta$,
where $\vol_{\dot{\eta}}$ is the volume form of $g_{\dot{\eta}}$.
Define
\begin{enumerate}[(1)]
\item $\Ric_n(v):=\displaystyle \left\{
 \begin{array}{ll} \Ric(v)+(\Psi \circ \eta)''(0) & {\rm if}\ (\Psi \circ \eta)'(0)=0, \\
 -\infty & {\rm otherwise}, \end{array} \right.$

\item $\Ric_N(v):=\Ric(v) +(\Psi \circ \eta)''(0) -\displaystyle\frac{(\Psi \circ \eta)'(0)^2}{N-n}\ $
for $N \in (n,\infty)$,

\item $\Ric_{\infty}(v):=\Ric(v) +(\Psi \circ \eta)''(0)$.
\end{enumerate}
For $c \ge 0$ and $N \in [n,\infty]$, we define $\Ric_N(cv):=c^2 \Ric_N(v)$.
\end{definition}

It is established in \cite[Theorem~1.2]{Oint} that, for $K \in \R$,
the bound $\Ric_N(v) \ge KF(v)^2$  is equivalent to Lott, Villani and the second author's
\emph{curvature-dimension condition} $\CD(K,N)$.
This extends the corresponding result on (weighted) Riemannian manifolds
(due to \cite{vRS}, \cite{Stcon}, \cite{StI}, \cite{StII}, \cite{LVII}, \cite{LVI}),
and has many analytic and geometric applications
(see \cite{Oint}, \cite{OS1} or a survey \cite{Osur}).

\begin{remark}\label{rm:Ric}
(a) In contrast to $\Delta^{\Nabla u}u=\Lap u$, $\Ric_N(\Nabla u)$
may not coincide with the weighted Ricci curvature $\Ric_N^{\Nabla u}(\Nabla u)$
of the weighted Riemannian manifold $(M,g_{\Nabla u},m)$.
Indeed, on a Minkowski space $(\R^n,\|\cdot\|)$, it is easy to see that
$(\R^n,g_{\Nabla u})$ is not flat for some function $u$.

(b) For a Riemannian manifold $(M,g,\vol_g)$ endowed with the Riemannian volume measure,
clearly we have $\Psi \equiv 0$ and hence $\Ric_N =\Ric$ for all $N \in [n,\infty]$.
It is also known that, for Finsler manifolds of \emph{Berwald type}
(i.e., $\Gamma_{ij}^k$ is constant on each $T_xM$),
the \emph{Busemann-Hausdorff measure} satisfies $(\Psi \circ \eta)' \equiv 0$
(in other words, Shen's $\bS$-curvature vanishes, see \cite[\S 7.3]{Shlec}).
In general, however, there may not exist any measure with vanishing
$\bS$-curvature (see \cite{ORand} for such an example).
This is a reason why we begin with an arbitrary measure $m$.
\end{remark}

For later convenience, we introduce the following notations.

\begin{definition}[Reverse Finsler structure]\label{df:rev}
Define the \emph{reverse Finsler structure} $\overleftarrow{F}$ of $F$ by
$\overleftarrow{F}(v):=F(-v)$.
We say that $F$ is \emph{reversible} if $\overleftarrow{F}=F$.
We will put an arrow $\leftarrow$ on those quantities associated with $\overleftarrow{F}$,
for example, $\overleftarrow{d}\!(x,y)=d(y,x)$, $\overleftarrow{\Nabla}u=-\Nabla(-u)$
and $\overleftarrow{\Ric}_N(v)=\Ric_N(-v)$.
\end{definition}

We say that $(M,F)$ is \emph{backward complete} if $(M,\overleftarrow{F})$
is forward complete.
Compact Finsler manifolds are both forward and backward complete.

\section{Bochner-Weitzenb\"ock formula}\label{sc:BW}

In this section, we prove the Bochner-Weitzenb\"ock formula.
For the sake of simplicity, we first derive a general formula for vector fields $V$,
and then apply it to gradient vector fields of functions.
We start with a point-wise calculation on $M_V$ (recall \eqref{eq:M_V})
followed by an integrated formula.

\subsection{Point-wise calculation}

We follow the argumentation in \cite[Chapter 14]{Vi2} in the Riemannian situation,
with the help of \cite{Oint} and \cite{OS1}.

As in the previous section,
let $(M,F)$ be an $n$-dimensional $\cC^{\infty}$-Finsler manifold
and $m$ be a positive $\cC^{\infty}$-measure on $M$.
Fix a $\cC^{\infty}$-vector field $V$ on $M$ and $x \in M_V$.
For $t \in (0,\delta]$ with sufficiently small $\delta>0$,
we introduce the map $\cT_t$ and the vector field $V_t$
on a neighborhood of $x$ by
\begin{equation}\label{eq:V_t}
\cT_t(y):=\exp_y[tV(y)], \qquad
 V_t\big( \cT_t(y) \big) :=\frac{\del \cT_t}{\del t}(y).
\end{equation}
As the curve $\sigma(t):=\cT_t(y)$ is geodesic, taking the covariant differentiation
$D^{\dot{\sigma}}_{\dot{\sigma}}$ of $t \longmapsto V_t(\sigma(t))$ yields the
\emph{pressureless Euler equation}
\begin{equation}\label{eq:Euler}
\frac{\del V_t}{\del t} +D_{V_t}^{V_t} V_t =0,
\end{equation}
where we abbreviated as
\[ \frac{\del V_t}{\del t}
 =\sum_{i=1}^n \frac{\del V^i_t}{\del t}\frac{\del}{\del x^i}, \qquad
 \textrm{with}\ V_t=\sum_{i=1}^n V^i_t \frac{\del}{\del x^i}. \]

Now we put $\eta(t):=\cT_t(x)$, take an orthonormal basis
$\{ e_i \}_{i=1}^n$ of $(T_xM,g_V)$
such that $e_n=\dot{\eta}(0)/F(\dot{\eta}(0))$, and consider the vector field along $\eta$
\[ E_i(t):=D(\cT_t)_x(e_i) \in T_{\eta(t)}M, \qquad i=1,2,\ldots,n. \]
Then each $E_i$ is a Jacobi field along $\eta$.
(Note that $E_n(t) \neq \dot{\eta}(t)/F(\dot{\eta}(t))$ in general,
since $\cT_t(\eta(s)) \neq \eta(s+t)$ for $s,t>0$.)
We set $E'_i(t):=D^{\dot{\eta}}_{\dot{\eta}}E_i(t)$ for simplicity.
Define the $n \times n$ matrix-valued function $B(t)$ by
$E'_i(t) =\sum_{j=1}^n b_{ij}(t)E_j(t)$.
(This $B$ corresponds to $U$ in \cite[Chapter 14]{Vi2}.)
Along the discussion in \cite[Lemma~7.2]{Oint},
we obtain the following \emph{Riccati type equation}.

\begin{lemma}[Riccati type equation]\label{lm:Ricc}
For $\eta$ and $B$ as above, we have
\[ \frac{d(\tr B)}{dt}(t) +\tr\!\big( B(t)^2 \big) +\Ric\!\big( \dot{\eta}(t) \big)=0,
\qquad t \ge 0. \]
\end{lemma}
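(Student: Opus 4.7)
The strategy is to adapt the classical Riemannian Riccati computation to the Finsler setting, using throughout the covariant derivative $D^{\dot\eta}_{\dot\eta}$ along $\eta$ with reference vector $\dot\eta$. The two ingredients I would combine are the Jacobi equation satisfied by the $E_i$ and a second differentiation of the defining relation $E'_i(t)=\sum_j b_{ij}(t) E_j(t)$.

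First, I would record that each $E_i$, being the variation field of the geodesic family $\cT_t$ emanating from $x$, is a Jacobi field along $\eta$ and hence satisfies the Finsler Jacobi equation
\[ E''_i(t) + R_{\dot\eta(t)}\!\big(E_i(t)\big) = 0, \]
where $R_{\dot\eta}:T_{\eta(t)}M \lra T_{\eta(t)}M$ is the flag curvature endomorphism attached to $\dot\eta$ via the Chern connection. Its $g_{\dot\eta}$-trace equals $\Ric(\dot\eta)$ by the very definition of the (flag) Ricci curvature.

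Next, I would differentiate the defining relation once more along $\eta$, obtaining
\[ E''_i = \sum_j b'_{ij}(t) E_j(t) + \sum_{j,k} b_{ij}(t) b_{jk}(t) E_k(t). \]
On a sufficiently small interval $[0,\delta]$, $\{E_j(t)\}_{j=1}^n$ remains a basis of $T_{\eta(t)}M$ by continuity from $t=0$, so I may expand $R_{\dot\eta(t)}(E_i(t)) = \sum_j r_{ij}(t) E_j(t)$. Substituting into the Jacobi equation and comparing coefficients yields the matrix Riccati identity $B'(t) + B(t)^2 + R(t) = 0$ with $R(t)=(r_{ij}(t))$. Taking trace (which is basis-independent) and using $\tr R(t) = \tr R_{\dot\eta(t)} = \Ric(\dot\eta(t))$ produces the stated equation.

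The main subtlety I anticipate is the $w$-dependence of the Finsler covariant derivative $D^w$: iterated covariant derivatives in principle drag in derivatives of the fundamental tensor $g_{ij}$ in the vertical direction, encoded by the Cartan tensor $A_{ijk}$. However, since the reference vector along $\eta$ is $\dot\eta$ itself, the contraction identities \eqref{eq:Av} force these Cartan corrections to drop out in the relevant contractions, and the entire derivation reduces to the Riemannian Riccati computation in the auxiliary metric $g_{\dot\eta}$ parallel-transported along $\eta$. This is precisely the mechanism underlying Lemma~7.2 of \cite{Oint}, to which one can appeal to legitimize the algebraic manipulations.
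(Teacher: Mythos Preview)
Your argument is correct. You take the standard direct route: differentiate $E'_i=\sum_j b_{ij}E_j$ once more along $\eta$, substitute the Jacobi equation $E''_i+R_{\dot\eta}(E_i)=0$, and read off the matrix Riccati identity $B'+B^2+R=0$, whose trace gives the lemma since $\tr R$ is the basis-independent trace of the flag curvature endomorphism, namely $\Ric(\dot\eta)$.

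The paper proceeds differently, via the Gram matrix $A(t)=(g_{\dot\eta}(E_i,E_j))$. From $A'=BA+AB^T$ and the Jacobi equation it first shows $(BA-AB^T)'\equiv 0$ (using the $g_{\dot\eta}$-self-adjointness of the curvature operator), hence $BA-AB^T\equiv B(0)-B(0)^T$. Computing $A''$ in two ways then yields $B'=-\Ric_{\dot\eta}A^{-1}-B^2$, where $\Ric_{\dot\eta}$ is the matrix $(g_{\dot\eta}(R^{\dot\eta}(E_i,\dot\eta)\dot\eta,E_j))$. Since $\Ric_{\dot\eta}=RA$, one has $\Ric_{\dot\eta}A^{-1}=R$, so taking the trace recovers exactly your equation.

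Your route is shorter for the purpose at hand; the paper's detour through $A$ mirrors \cite[Lemma~7.2]{Oint}, where the Gram matrix itself (not just $\tr B$) is needed for Jacobian and volume-distortion control. Both arguments rest on the same two facts you correctly isolate: the Jacobi equation along $\eta$ with reference vector $\dot\eta$, and the metric compatibility of $D^{\dot\eta}_{\dot\eta}$ with $g_{\dot\eta}$ along the geodesic, which makes the Cartan-tensor corrections vanish via \eqref{eq:Av}.
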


\begin{proof}
We give only an outline of the proof, see \cite{Oint} for details.
Consider the matrix-valued function $A(t)=(g_{\dot{\eta}}(E_i(t),E_j(t)))$
and observe $A'=BA+AB^T$ by definition ($B^T$ is the transpose of $B$).
Since each $E_i$ is a Jacobi field, we have $(BA-AB^T)' \equiv 0$, so that
\[ BA-AB^T \equiv B(0)-B(0)^T, \qquad A'=2BA-B(0)+B(0)^T. \]
We also deduce from the Jacobi equation for $E_i$ that
\[ A''=-2\Ric_{\dot{\eta}} +2BAB^T
 =-2\Ric_{\dot{\eta}} +2B^2 A -2B\big( B(0)-B(0)^T \big), \]
where
$(\Ric_{\dot{\eta}})_{ij}
:=g_{\dot{\eta}}(R^{\dot{\eta}}(E_i,\dot{\eta})\dot{\eta},E_j)$
and $R^{\dot{\eta}}$ is the curvature tensor.
Comparing this with $A''=2B'A+2BA'$,
we find $B'=-\Ric_{\dot{\eta}} A^{-1} -B^2$.
Taking the trace with respect to $g_{\dot{\eta}}$ completes the proof.
$\qedd$
\end{proof}

\begin{lemma}\label{lm:B(t)}
\begin{enumerate}[{\rm (i)}]
\item
We have $B(t)=\Nabla V_t(\eta(t))$ in the sense that,
for each $i=1,2,\ldots,n$, $\Nabla V_t(E_i(t))=\sum_{j=1}^n b_{ij}(t)E_j(t)$.

\item
It holds that $\tr(B(t))=\div_m V_t(\eta(t))+D\Psi(\dot{\eta}(t))$,
where $m=e^{-\Psi}\vol_{\dot{\eta}}$ along $\eta$ such that $\vol_{\dot{\eta}}$
denotes the Riemannian volume measure of $g_{\dot{\eta}}$.
\end{enumerate}
\end{lemma}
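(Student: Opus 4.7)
The plan is to derive both parts from the fact that $\cT_t$ is a simultaneous flow of geodesics. For part (i), I would introduce the two-parameter variation $\alpha(s,t):=\cT_t(\exp_x(se_i))$. For each fixed $s$, the curve $t\mapsto\alpha(s,t)$ is a geodesic with velocity $V_t(\alpha(s,t))$, and at $s=0$ it coincides with $\eta(t)$. The variation field is $(\del\alpha/\del s)(0,t)=D(\cT_t)_x(e_i)=E_i(t)$ and the geodesic velocity is $(\del\alpha/\del t)(0,t)=V_t(\eta(t))=\dot\eta(t)$. Since the reference vector used in $\Nabla V_t$ is exactly $V_t=\del\alpha/\del t$, the torsion-free property of the Chern connection (applicable to variations of geodesics when the reference vector matches the geodesic velocity) allows me to swap the two covariant differentiations:
\begin{equation*}
\Nabla V_t\big(E_i(t)\big)=D^{V_t}_{\del\alpha/\del s}(\del\alpha/\del t)\big|_{s=0}=D^{\dot\eta}_{\del\alpha/\del t}(\del\alpha/\del s)\big|_{s=0}=D^{\dot\eta}_{\dot\eta}E_i(t)=E_i'(t).
\end{equation*}
By the definition of the matrix $B(t)$ this is precisely $\sum_{j=1}^n b_{ij}(t)E_j(t)$.

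For part (ii), I plan to realize $\tr B(t)$ as the logarithmic derivative of a Jacobian and then compare with the classical identity linking divergence to flow Jacobians. Starting from $A'=BA+AB^T$ (already derived within the proof of Lemma~\ref{lm:Ricc}), a direct algebraic manipulation gives $(\det A)'/\det A=2\tr B$, so
\begin{equation*}
\tr B(t)=\frac{d}{dt}\log\sqrt{\det A(t)}.
\end{equation*}
Since $\{e_i\}$ is $g_V$-orthonormal at $x$ and $E_i(t)=D(\cT_t)_x(e_i)$, the quantity $\sqrt{\det A(t)}$ is the ratio by which $\cT_t$ stretches the $g_V$-unit volume at $x$ into the $g_{\dot\eta(t)}$-volume at $\eta(t)$. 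Inserting the weighted decomposition $m=e^{-\Psi}\vol_{\dot\eta}$ along $\eta$, the Jacobian $J_t^m$ of $\cT_t:(M,m)\to(M,m)$ at $x$ becomes
\begin{equation*}
J_t^m(x)=e^{-\Psi(\eta(t))+\Psi(x)}\sqrt{\det A(t)},
\end{equation*}
whence $\frac{d}{dt}\log J_t^m(x)=\tr B(t)-D\Psi(\dot\eta(t))$. On the other hand, by \eqref{eq:V_t} and \eqref{eq:Euler}, $\cT_t$ is the flow of the time-dependent vector field $V_t$ from time $0$ to time $t$, so the standard formula for flow Jacobians yields $\frac{d}{dt}\log J_t^m(x)=(\div_m V_t)(\eta(t))$. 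Equating the two expressions gives the claim.

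The main obstacle I anticipate is the bookkeeping caused by the Finsler-specific fact that the reference metric $g_{\dot\eta(t)}$ varies with $t$ along $\eta$. One must verify that when differentiating $\log\sqrt{\det A(t)}$ this time dependence is absorbed into $\tr B(t)$ without producing spurious terms, which relies on the compatibility of the Chern connection with $g_{\dot\eta}$ along the geodesic $\eta$ (so that the identity $A'=BA+AB^T$ holds with no extra contribution from the moving reference vector). A similar care is needed for the reference-vector manipulation in part (i), where $\Nabla V_t$ is a genuinely nonlinear object away from $\eta$ but behaves linearly on $T_{\eta(t)}M$ because $V_t(\eta(t))=\dot\eta(t)$ selects a single reference direction.
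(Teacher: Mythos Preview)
Your argument is correct. For part~(i) you are doing exactly what the paper does---swap the order of the two derivatives in the variation $\alpha(s,t)=\cT_t(\exp_x(se_i))$---only you phrase the swap via the torsion-freeness $\Gamma^i_{jk}=\Gamma^i_{kj}$ of the Chern connection, while the paper carries it out explicitly in coordinates (writing $dE_i^k/dt=\partial_\delta[V_t^k(\cT_t(\exp_x\delta e_i))]|_{\delta=0}$). For part~(ii), however, your route is genuinely different: the paper fixes $t$, picks a $g_{V_t}$-orthonormal coordinate frame at $\eta(t)$, and matches $\tr(\Nabla V_t)=\sum_i\partial_iV_t^i+\sum_{i,j}\Gamma^i_{ij}(V_t)V_t^j$ against $\div_m V_t+D\Psi(\dot\eta)$ by a direct calculation using~\eqref{eq:Gamma}, \eqref{eq:Av} and the geodesic equation~\eqref{eq:geod}. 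You instead differentiate in $t$: from $A'=BA+AB^T$ you get $\tr B=\frac{d}{dt}\log\sqrt{\det A}$, identify $\sqrt{\det A}$ with the Jacobian of $\cT_t$ relative to the moving volume form $\vol_{g_{\dot\eta}}$, and invoke the flow-Jacobian identity $\frac{d}{dt}\log J_t^m=(\div_m V_t)\circ\cT_t$. Your approach is more conceptual and avoids the Cartan-tensor bookkeeping; the paper's approach is pointwise in $t$ and makes transparent exactly where the Finsler-specific terms cancel. The concern you flag about the $t$-dependence of $g_{\dot\eta(t)}$ is handled, as you say, by the almost-metric-compatibility of the Chern connection along the geodesic $\eta$, which is precisely what makes $A'=BA+AB^T$ hold without a Cartan correction.
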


\begin{proof}
(i) By the definition \eqref{eq:DV},
$\Nabla V_t(E_i(t)) =D_{E_i(t)}^{V_t}V_t(\eta(t))$.
Setting
\[ E_i(t) =\frac{\del}{\del \delta}
 \Big[ \cT_t(\exp_x \delta e_i) \Big] \Big|_{\delta=0}
 =: \sum_{k=1}^n E_i^k(t) \frac{\del}{\del x^k}\Big|_{\eta(t)}, \]
we have
\[ E'_i(t) =\sum_{k=1}^n \frac{dE_i^k}{dt}(t) \frac{\del}{\del x^k}
 +\sum_{j,k,l=1}^n \Gamma^l_{jk} \big( \dot{\eta}(t) \big) \dot{\eta}^j(t) E^k_i(t) \frac{\del}{\del x^l}, \]
where $\dot{\eta}(t)=\sum_{j=1}^n \dot{\eta}^j(t) (\del/\del x^j)|_{\eta(t)}$.
Exchanging the order of differentiations (by $\delta$ and $t$) in the first term,
we deduce from \eqref{eq:V_t} that
\[ \frac{dE_i^k}{dt}(t) =\frac{\del}{\del \delta}
 \Big[ V_t^k \big( \cT_t(\exp_x \delta e_i) \big) \Big] \Big|_{\delta=0}. \]
Therefore we obtain
$E'_i(t)=D_{E_i(t)}^{V_t}V_t(\eta(t))=\Nabla V_t(E_i(t))$
and complete the proof.

(ii) Choose a coordinate $(x^i)_{i=1}^n$ around $\eta(t)$ such that
$\{ (\del/\del x^i)|_{\eta(t)} \}_{i=1}^n$ is an orthonormal basis of
$(T_{\eta(t)}M,g_{V_t})$.
We will suppress evaluations at $\eta(t)$.
Recall first that
\[ \Nabla V_t \bigg( \frac{\del}{\del x^i} \bigg)
 =D_{\del/\del x^i}^{V_t}V_t
 =\sum_{k=1}^n \bigg\{ \frac{\del V_t^k}{\del x^i}
 +\sum_{j=1}^n \Gamma^k_{ij}(V_t) V_t^j \bigg\}
 \frac{\del}{\del x^k}. \]
Thus we have
\[ \tr\!\big(B(t) \big)=\tr(\Nabla V_t)
 =\sum_{i=1}^n \bigg\{ \frac{\del V_t^i}{\del x^i}
 +\sum_{j=1}^n \Gamma^i_{ij}(V_t) V_t^j \bigg\}. \]
We also find, by \eqref{eq:Gamma} and \eqref{eq:Av},
\[ \sum_{j=1}^n \Gamma^i_{ij}(V_t) V_t^j
 =\sum_{j=1}^n \bigg\{ \frac{1}{2} \frac{\del g_{ii}}{\del x^j}(V_t) V_t^j
 -A_{iij}(V_t) \frac{G^j(V_t)}{F(V_t)} \bigg\}. \]

Next we observe from $dm=e^{-\Psi}\sqrt{\det g_{\dot{\eta}(t)}} \,dx^1 dx^2 \cdots dx^n$
on $\eta$ that
\[ \div_m V_t \big( \eta(t) \big) +D\Psi\big( \dot{\eta}(t) \big)
 =\sum_{i=1}^n \frac{\del V_t^i}{\del x^i}
 +\frac{d(\log\sqrt{\det g_{\dot{\eta}(t)}})}{dt}. \]
Note that
\begin{align*}
\frac{d(\log\sqrt{\det g_{\dot{\eta}(t)}})}{dt}
&=\frac{1}{2} \tr\bigg( \frac{d g_{\dot{\eta}(t)}}{dt} \cdot g_{\dot{\eta}(t)}^{-1} \bigg) \\
&= \frac{1}{2} \sum_{i,j=1}^n \bigg\{ V_t^j \frac{\del g_{ii}}{\del x^j}(V_t)
 +\frac{\del g_{ii}}{\del v^j}(V_t) \ddot{\eta}^j(t) \bigg\}.
\end{align*}
Since $\eta$ is a geodesic, we obtain from \eqref{eq:geod} that
\[ \frac{d(\log\sqrt{\det g_{\dot{\eta}(t)}})}{dt}
 = \sum_{i,j=1}^n \bigg\{ \frac{1}{2}
 \frac{\del g_{ii}}{\del x^j}(V_t) V_t^j
 -\frac{A_{iij}(V_t)}{F(V_t)} G^j(V_t) \bigg\}
 = \sum_{i,j=1}^n \Gamma^i_{ij}(V_t) V_t^j. \]
This completes the proof.
$\qedd$
\end{proof}

We deduce from Lemmas~\ref{lm:Ricc}, \ref{lm:B(t)}(ii) that
\[ \frac{d}{dt}\Big|_{t=0+} \Big[ \div_m V_t\big( \eta(t) \big) \Big]
 +\tr\big( B(0)^2 \big)
 +\Ric_{\infty}\! \big( \dot{\eta}(0) \big)=0. \]
Thanks to \eqref{eq:Euler}, we have
\begin{align*}
\frac{d}{dt}\Big|_{t=0+} \Big[ \div_m V_t \big( \eta(t) \big) \Big]
&= D(\div_m V)\big( \dot{\eta}(0) \big)
 +\div_m \bigg(\frac{\del V_t}{\del t}\Big|_{t=0+} \bigg)(x) \\
&= D(\div_m V)\big( \dot{\eta}(0) \big) -\div_m (D_V^V V)(x).
\end{align*}
Combining these, we obtain
\begin{equation}\label{eq:V-BW}
\div_m (D_V^V V) -D(\div_m V)(V) =\Ric_{\infty}(V) +\tr\big( B(0)^2 \big)
\end{equation}
at $x$.

Now, we take $u \in \cC^{\infty}(M)$ and $x \in M_u$,
and apply \eqref{eq:V-BW} to $V=\Nabla u$.
Then the symmetry of $\Nabla^2 u$ (Lemma~\ref{lm:D2u}) allows us to simplify
\eqref{eq:V-BW} in two respects.
First, Lemma~\ref{lm:B(t)}(i) immediately yields
$\tr(B(0)^2)=\| \Nabla^2 u(x) \|^2_{HS(\Nabla u)}$,
where $\|\cdot\|_{HS(\Nabla u)}$ stands for the Hilbert-Schmidt norm
with respect to $g_{\Nabla u}$.
Second, for each $i=1,2,\ldots,n$,
\[ g_{\Nabla u}\bigg( D^{\Nabla u}_{\Nabla u} (\Nabla u),\frac{\del}{\del x^i} \bigg)
 =g_{\Nabla u}\big( \Nabla u,D^{\Nabla u}_{\del/\del x^i} (\Nabla u) \big)
 =\frac{\del}{\del x^i} \bigg( \frac{g_{\Nabla u}(\Nabla u,\Nabla u)}{2} \bigg). \]
This implies
\[ D^{\Nabla u}_{\Nabla u} (\Nabla u) =\sum_{i,j=1}^n g^{ji}(\Nabla u)
 g_{\Nabla u}\bigg( D^{\Nabla u}_{\Nabla u} (\Nabla u),\frac{\del}{\del x^i} \bigg) \frac{\del}{\del x^j}
 =\nabla^{\Nabla u} \bigg( \frac{F(\Nabla u)^2}{2} \bigg). \]
Plugging these into \eqref{eq:V-BW}, we obtain \eqref{eq:BW} below on $M_u$.

\begin{theorem}[Point-wise Bochner-Weitzenb\"ock formula]\label{th:pwBW}
Given $u \in \cC^{\infty}(M)$, we have
\begin{equation}\label{eq:BW}
\Delta^{\Nabla u} \bigg( \frac{F(\Nabla u)^2}{2} \bigg) -D(\Lap u)(\Nabla u)
 =\Ric_{\infty}(\Nabla u) +\| \Nabla^2 u \|_{HS(\Nabla u)}^2
\end{equation}
as well as
\begin{equation}\label{eq:N-BW}
\Delta^{\Nabla u} \bigg( \frac{F(\Nabla u)^2}{2} \bigg) -D(\Lap u)(\Nabla u)
 \ge \Ric_N(\Nabla u) +\frac{(\Lap u)^2}{N}
\end{equation}
for $N \in [n,\infty]$, point-wise on $M_u$.
\end{theorem}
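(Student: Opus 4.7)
The identity \eqref{eq:BW} has, in fact, been derived in the paragraph immediately preceding the statement: I would simply collect that computation, which specializes the general formula \eqref{eq:V-BW} to $V=\Nabla u$, using Lemma~\ref{lm:B(t)}(i) to identify $\tr(B(0)^2)$ with $\|\Nabla^2 u\|_{HS(\Nabla u)}^2$ and the symmetry of $\Nabla^2 u$ (Lemma~\ref{lm:D2u}) to rewrite $D^{\Nabla u}_{\Nabla u}(\Nabla u)$ as $\nabla^{\Nabla u}(F(\Nabla u)^2/2)$. So the only piece of the theorem that still requires a self-contained argument is the dimensional refinement \eqref{eq:N-BW}.

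For \eqref{eq:N-BW}, my plan is to combine two ingredients. First, since the Hilbert--Schmidt norm of a symmetric endomorphism dominates its trace via Cauchy--Schwarz, I would write
\[ \| \Nabla^2 u \|_{HS(\Nabla u)}^2 \ge \frac{\big( \tr_{g_{\Nabla u}}(\Nabla^2 u) \big)^2}{n}, \]
and then invoke Lemma~\ref{lm:B(t)}(ii) at $t=0$ with $V=\Nabla u$ to identify this trace with $\Lap u + D\Psi(\Nabla u)$, where $\Psi$ is the weight from Definition~\ref{df:wRic} along the geodesic with initial velocity $\Nabla u(x)$. Second, from Definition~\ref{df:wRic} together with the $2$-homogeneity convention $\Ric_N(cv):=c^2\Ric_N(v)$, I would record the identity
\[ \Ric_{\infty}(\Nabla u) -\Ric_N(\Nabla u) =\frac{D\Psi(\Nabla u)^2}{N-n} \qquad\text{for } N\in(n,\infty). \]
Substituting both into \eqref{eq:BW} reduces \eqref{eq:N-BW} to the purely algebraic scalar inequality
\[ \frac{b^2}{N-n} +\frac{(a+b)^2}{n} \ge \frac{a^2}{N}, \qquad a:=\Lap u,\ b:=D\Psi(\Nabla u), \]
which after clearing denominators is equivalent to $(Nb+(N-n)a)^2\ge 0$ and therefore automatic.

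The two boundary regimes I would dispatch separately. For $N=\infty$ the second summand on the right of \eqref{eq:N-BW} is absent and the claim collapses to $\|\Nabla^2 u\|_{HS(\Nabla u)}^2\ge 0$. For $N=n$ the inequality \eqref{eq:N-BW} is vacuous unless $D\Psi(\Nabla u(x))=0$ (otherwise $\Ric_n(\Nabla u)=-\infty$), and in that degenerate case the Cauchy--Schwarz step alone closes matters, since then $\Ric_n(\Nabla u)=\Ric_{\infty}(\Nabla u)$ and $\tr_{g_{\Nabla u}}(\Nabla^2 u)=\Lap u$. The one point where I anticipate needing care is the bookkeeping between the unit-speed normalization of $v$ used in Definition~\ref{df:wRic} and the general (non-unit) reference vector $\Nabla u(x)$ appearing in \eqref{eq:BW}; this will however resolve cleanly because both sides of \eqref{eq:N-BW} are positively $2$-homogeneous in $\Nabla u$, so the chain rule for $\Psi\circ\eta$ supplies the correct rescaling of $(\Psi\circ\eta)'(0)$ and $(\Psi\circ\eta)''(0)$.
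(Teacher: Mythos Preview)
Your proposal is correct and follows essentially the same route as the paper: both reduce \eqref{eq:N-BW} to \eqref{eq:BW} via the Cauchy--Schwarz bound $\|\Nabla^2 u\|_{HS(\Nabla u)}^2\ge(\Lap u+D\Psi(\Nabla u))^2/n$ (using Lemma~\ref{lm:B(t)}(ii)) together with the scalar inequality $\frac{(a+b)^2}{n}+\frac{b^2}{N-n}\ge\frac{a^2}{N}$, which the paper packages as the identity $\frac{(a+b)^2}{n}=\frac{a^2}{N}-\frac{b^2}{N-n}+\frac{N(N-n)}{n}\big(\frac{a}{N}+\frac{b}{N-n}\big)^2$. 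The only cosmetic difference is your explicit case split at $N=n$, whereas the paper obtains that endpoint as a limit.
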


\begin{proof}
We have seen that \eqref{eq:BW} holds on $M_u$.
The second assertion is clear if $N=\infty$.
For $N \in (n,\infty)$, we observe from Lemma~\ref{lm:B(t)} that,
as $\{e_i\}_{i=1}^n$ is orthonormal so that $B(0)$ is symmetric,
\begin{align*}
\| \Nabla^2 u \|_{HS(\Nabla u)}^2 &=\tr\big( B(0)^2 \big)
 =\frac{(\tr B(0))^2}{n} +\bigg\| B(0)-\frac{\tr B(0)}{n}I_n \bigg\|_{HS}^2 \\
&\ge \frac{(\Lap u+D\Psi(\Nabla u))^2}{n}.
\end{align*}
Plugging $a=\Lap u$ and $b=D\Psi(\Nabla u)$ into
\[ \frac{(a+b)^2}{n}
 =\frac{a^2}{N} -\frac{b^2}{N-n} +\frac{N(N-n)}{n}\bigg( \frac{a}{N}+\frac{b}{N-n} \bigg)^2, \]
we obtain
\[ \frac{(\Lap u+D\Psi(\Nabla u))^2}{n}
 \ge \frac{(\Lap u)^2}{N} -\frac{D\Psi(\Nabla u)^2}{N-n}. \]
Together with \eqref{eq:BW}, this yields the desired estimate \eqref{eq:N-BW}.
The remaining case of $N=n$ is derived as the limit.
$\qedd$
\end{proof}

\begin{remark}\label{rm:BW}
Our Bochner-Weitzenb\"ock formula \eqref{eq:BW} can not be simply derived
from the formula of the weighted Riemannian manifold $(M,g_{\Nabla u},m)$.
This is because $\Ric_{\infty}(\Nabla u)$ and $\| \Nabla^2 u \|_{HS(\Nabla u)}^2$
are different from those for $g_{\Nabla u}$ (unless all integral curves of $\Nabla u$ are geodesic),
while $D(\Lap u)(\Nabla u)$ and $\Delta^{\Nabla u}(F(\Nabla u)^2/2)$ are same for $F$ and $g_{\Nabla u}$.
Recall Remark~\ref{rm:Ric}(a) and note that $\Nabla^2 u(v) \neq (\nabla^{\Nabla u})^2 u(v)$
in general (for $v \neq \Nabla u$).
\end{remark}

Let us close the subsection with additional comments on the situation of $V=\Nabla u$
with $u \in \cC^{\infty}_c(M)$.
A little knowledge of optimal transport theory is necessary
for which we refer to \cite{Vi2} and \cite{Oint}.
A potential function $\varphi_t$ of the vector field
$V_t$ (i.e., $\Nabla \varphi_t=V_t$) is given by
\begin{equation}\label{eq:HL}
\varphi_t(y):=\inf_{x \in M} \bigg\{ \frac{d(x,y)^2}{2t} +u(x) \bigg\}.
\end{equation}
This is the \emph{Hopf-Lax formula} providing
a (viscosity) solution to the \emph{Hamilton-Jacobi equation}
\begin{equation}\label{eq:HJ}
\frac{\del \varphi_t}{\del t} +\frac{F(\Nabla\varphi_t)^2}{2}=0
\end{equation}
that corresponds to \eqref{eq:Euler}.
Indeed, we can apply the proof of \cite[Theorem~2.5]{LV-HJ}
verbatim to see \eqref{eq:HJ}, although our distance is nonsymmetric.
In view of optimal transport theory, \eqref{eq:HL} is rewritten as
\[ t\varphi_t(y) =\inf_{x \in M} \bigg\{ \frac{d(x,y)^2}{2} -\big( \!-tu(x) \big) \bigg\}
 =(-tu)^c(y), \]
where $(-tu)^c$ stands for the \emph{$c$-transform} of the function $-tu$ for the cost $c(x,y)=d(x,y)^2/2$.
Thus, if we put $\eta(t)=\exp_x[t\Nabla u(x)]$, then it holds that
\[ \dot{\eta}(t) =-\overleftarrow{\Nabla}(-\varphi_t)\big( \eta(t) \big) =\Nabla\varphi_t\big( \eta(t) \big). \]
Hence, in terms of Wasserstein geometry, the vector field
$\Nabla\varphi_t$ is the tangent vector at time $t$ of the geodesic
$(\exp[t\Nabla u]_{\sharp}\mu)_{t \in [0,\delta]}$ starting from any absolutely continuous
probability measure $\mu$ (whereas $\delta$ depends on $u$),
where $\exp[t\Nabla u]_{\sharp}\mu$ denotes the push-forward of
$\mu$ by the map $\exp[t\Nabla u]$.

\subsection{Integrated formula}

We need to be careful in extending the point-wise estimates
\eqref{eq:BW}, \eqref{eq:N-BW} to global ones (in the weak sense),
because some quantities are undefined on $M \setminus M_u$.
We prove an auxiliary lemma to overcome this difficulty.

\begin{lemma}\label{lm:intBW}
Let $L$ be a locally uniformly elliptic, linear second order differential operator
in divergence form on a Riemannian manifold.
Let $\cE^L$ denote the associated Dirichlet form with domain $H^1_0(M)$
and with intrinsic gradient denoted by $\nabla^L$, i.e.,
\[ \cE^L(h,h)=-\int_M h \cdot Lh \,dm= \int_M |\nabla^L h|^2 \,dm,
 \qquad h \in H^1_0(M). \]
Then we have the following.
\begin{enumerate}[{\rm (i)}]
\item For each $h \in H^1_0(M)$, it holds $\nabla^L h=0$ a.e.\ on $h^{-1}(0)$.

\item If $h \in H^1_0(M) \cap L^{\infty}(M)$,
then $\nabla^L(h^2/2)=h\nabla^L h=0$ a.e.\ on $h^{-1}(0)$.

\item The assertions in {\rm (i)} and {\rm (ii)} also hold true if $h$ merely lies locally
in the respective spaces.
\end{enumerate}
\end{lemma}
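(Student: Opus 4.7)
The plan is to reduce all three assertions to the classical Stampacchia fact that the weak Riemannian gradient of a Sobolev function vanishes almost everywhere on each of its level sets, after first observing that the zero set of $\nabla^L h$ coincides (up to measure zero) with that of the ordinary Riemannian gradient, which I shall denote $\nabla^g h$. Indeed, since $L$ is in divergence form and locally uniformly elliptic on the Riemannian manifold $(M,g)$, there is a measurable field $A$ of symmetric endomorphisms of $TM$ with $\lambda_K g \le A \le \Lambda_K g$ on each compact $K\subset M$ (for constants $0<\lambda_K\le\Lambda_K<\infty$) such that $|\nabla^L h|^2 = g(A\nabla^g h,\nabla^g h)$. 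Local uniform ellipticity then gives $|\nabla^L h|=0$ a.e.\ on a measurable set $E$ if and only if $\nabla^g h=0$ a.e.\ on $E$, so it suffices to establish the three assertions for $\nabla^g$ in place of $\nabla^L$.

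For (i), I would deduce $\nabla^g h=0$ a.e.\ on $h^{-1}(0)$ from the chain rule for compositions with Lipschitz functions. Concretely, pick a smooth $1$-Lipschitz family $f_\ve\colon\R\to\R$ with $f_\ve(t)=0$ for $|t|\le\ve$ and $f_\ve(t)=t-\ve\,\mathrm{sgn}(t)$ for $|t|\ge 2\ve$. Then $f_\ve\circ h\in H^1_0(M)$ with $\nabla^g(f_\ve\circ h)=f'_\ve(h)\nabla^g h$, and as $\ve\downarrow 0$ dominated convergence gives $f_\ve\circ h\to h$ in $L^2(M,m)$ and $f'_\ve(h)\nabla^g h\to \chi_{\{h\neq 0\}}\nabla^g h$ in $L^2(TM,m)$. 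By uniqueness of the weak gradient, $\nabla^g h=\chi_{\{h\neq 0\}}\nabla^g h$, i.e.\ $\nabla^g h=0$ a.e.\ on $h^{-1}(0)$; combined with the reduction above this proves (i).

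For (ii), the Leibniz rule for Sobolev functions gives $\nabla^L(h^2/2)=h\,\nabla^L h$ a.e.\ whenever $h\in H^1_0(M)\cap L^\infty(M)$, and the right-hand side vanishes trivially on $h^{-1}(0)$; the same identity applied together with (i) shows both equalities. For (iii), I would localize by smooth cutoffs $\chi\in\cC_c^\infty(M)$: the product $\chi h$ lies in $H^1_0(M)$ (respectively $H^1_0(M)\cap L^\infty(M)$) whenever $h\in H^1_\loc(M)$ (resp.\ $H^1_\loc(M)\cap L^\infty_\loc(M)$), so (i) and (ii) apply to it, while on the open set where $\chi\equiv 1$ one has $\nabla^L(\chi h)=\nabla^L h$ by locality of the weak gradient. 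Exhausting $M$ by an increasing family of such sets yields the claim.

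The hard part is essentially a bookkeeping issue rather than a genuine obstacle: the lemma is a packaging of standard Sobolev-space facts, and the only point demanding care is the clean passage from the intrinsic gradient $\nabla^L$ associated with the Dirichlet form $\cE^L$ to the underlying Riemannian gradient $\nabla^g$, so that Stampacchia's theorem and the Leibniz rule can be invoked directly.
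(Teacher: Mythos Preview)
Your proof is correct, and for parts~(ii) and~(iii) it is essentially identical to the paper's argument (chain/Leibniz rule for~(ii), localization by cutoffs for~(iii)). The genuine difference lies in part~(i): the paper simply invokes the \emph{energy image density} property for local regular Dirichlet forms from Bouleau--Hirsch, namely that $\cE^L(h,h)=\int_{\{h\neq 0\}}\Gamma(h,h)\,dm$, and says nothing more. You instead give a self-contained Stampacchia-type argument: reduce $\nabla^L$ to the Riemannian gradient $\nabla^g$ via uniform ellipticity, then use the smooth truncations $f_\ve$ and closedness of the gradient to conclude $\nabla^g h=\chi_{\{h\neq 0\}}\nabla^g h$. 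Your route is more elementary and avoids the Dirichlet-form machinery entirely, at the cost of a few extra lines; the paper's route is a one-line citation but assumes the reader knows (or is willing to look up) the energy image density theorem. Both are perfectly legitimate here since the operator is classical.
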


\begin{proof}
(i) is a particular case of the general fact that, for local regular Dirichlet forms
with square field operator $\Gamma$,
\[ \cE^L(h,h)=\int_M \Gamma(h,h) \,dm =\int_{\{h \neq 0\}} \Gamma(h,h) \,dm. \]
(In the monograph~\cite{BH}, this is called the \emph{energy image density}.)

(ii) follows from the chain rule.

(iii) Given an arbitrary open relatively compact set $U \subset M$,
choose a cut-off function $\phi \in H^1_c(M) \cap L^{\infty}(M)$ such that
$\phi \equiv 1$ on $U$.
Then, for each $h \in H^1_{\loc}(M)$, it holds $h\phi \in H^1_0(M)$
and we have $\nabla^L (h\phi)=\nabla^L \phi$ a.e.\ on
$(h\phi)^{-1}(0) \cap U =h^{-1}(0) \cap U$.
Thus we can reduce the claim to (i) and (ii).
$\qedd$
\end{proof}

\begin{theorem}[Integrated Bochner-Weitzenb\"ock formula]\label{th:intBW}
Given $u \in H^2_{\loc}(M) \cap \cC^1(M)$ with $\Lap u \in H^1_{\loc}(M)$,
we have
\begin{align*}
&-\int_M D\phi\bigg( \nabla^{\Nabla u} \bigg( \frac{F(\Nabla u)^2}{2} \bigg) \bigg)
 \,dm \\
&= \int_M \phi \big\{ D(\Lap u)(\Nabla u) +\Ric_{\infty}(\Nabla u)
 +\| \Nabla^2 u \|_{HS(\Nabla u)}^2 \big\} \,dm
\end{align*}
for all $\phi \in H^1_c(M) \cap L^{\infty}(M)$, and
\begin{equation}\label{eq:N-wBW}
-\int_M D\phi\bigg( \nabla^{\Nabla u} \bigg( \frac{F(\Nabla u)^2}{2} \bigg) \bigg) \,dm
 \ge \int_M \phi \bigg\{ D(\Lap u)(\Nabla u) +\Ric_N(\Nabla u)
 +\frac{(\Lap u)^2}{N} \bigg\} \,dm
\end{equation}
for $N \in [n,\infty]$ and all nonnegative functions
$\phi \in H^1_c(M) \cap L^{\infty}(M)$.
\end{theorem}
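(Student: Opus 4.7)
The strategy is to multiply the point-wise identity~\eqref{eq:BW} by $\phi$, integrate against $m$, and convert $\int\phi\Delta^{\Nabla u}(F(\Nabla u)^2/2)\,dm$ into weak form by integrating by parts on $M_u$. The $N$-inequality~\eqref{eq:N-wBW} will then follow point-wise a.e.\ on $M_u$ from the same algebraic manipulation used in the proof of Theorem~\ref{th:pwBW}, integrated against $\phi\ge 0$.

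First I would verify integrability. Extended by zero to $M\setminus M_u$, the right-hand side terms $\Ric_\infty(\Nabla u)$ and $D(\Lap u)(\Nabla u)$ vanish automatically there by their homogeneity in $\Nabla u$, while $\|\Nabla^2 u\|_{HS(\Nabla u)}^2$ and the vector field $\nabla^{\Nabla u}(F(\Nabla u)^2/2)$ are declared zero there by convention. Under the hypotheses $u\in H^2_{\loc}\cap\cC^1$ with $\Lap u\in H^1_{\loc}$, each of these lies in $L^1_{\loc}(M)$. The point-wise identity~\eqref{eq:BW} itself persists a.e.\ on $M_u$ under this weakened regularity, since the derivation in Theorem~\ref{th:pwBW} only invokes second-order weak derivatives of $u$ together with the smoothness of the Legendre transform away from the zero section, so a local $\cC^\infty$-approximation on $M_u$ yields the identity in the limit.

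For test functions $\phi\in\cC^\infty_c(M_u)$, the integrated identity is immediate: on $(M_u,g_{\Nabla u},m)$ the Riemannian metric $g_{\Nabla u}$ is uniformly elliptic on relatively compact subsets, so the Dirichlet form of $\Delta^{\Nabla u}$ produces the claim through the standard integration by parts. The main obstacle is extending to $\phi\in H^1_c(M)\cap L^\infty(M)$ whose support may intersect $\partial M_u=\{F(\Nabla u)=0\}$. My plan is to truncate via $\phi_\ve:=\phi\chi_\ve$ for a cutoff $\chi_\ve$ equal to $1$ on $\{F(\Nabla u)^2\ge\ve\}$ and supported in $\{F(\Nabla u)^2\ge\ve/2\}\subset M_u$, apply the already-established identity to $\phi_\ve$, and pass to the limit $\ve\to 0$. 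Dominated convergence controls the right-hand side and the principal term $-\int\chi_\ve D\phi(\nabla^{\Nabla u}(F(\Nabla u)^2/2))\,dm$; the delicate piece is the cross term $\int\phi\, D\chi_\ve(\nabla^{\Nabla u}(F(\Nabla u)^2/2))\,dm$, and eliminating it in the limit is precisely the purpose of Lemma~\ref{lm:intBW}. Applied with $L=\Delta^{\Nabla u}$ on relatively compact subsets of $M_u$ and $h=F(\Nabla u)^2/2$, the lemma yields $\nabla^{\Nabla u}h=0$ a.e.\ on $\{h=0\}$; a dominated-convergence argument then forces $\int_{\{h<\ve\}} g_{\Nabla u}(\nabla^{\Nabla u}h,\nabla^{\Nabla u}h)\,dm\to 0$, which controls the cutoff term.

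Once the $N=\infty$ equality is in hand, \eqref{eq:N-wBW} for $N\in[n,\infty]$ is obtained by combining it pointwise a.e.\ on $M_u$ with the inequality $\|\Nabla^2u\|_{HS(\Nabla u)}^2\ge(\Lap u+D\Psi(\Nabla u))^2/n$ and the completion-of-squares identity from the proof of Theorem~\ref{th:pwBW}; integrating against $\phi\ge 0$ preserves the inequality, and the case $N=n$ arises as a limit.
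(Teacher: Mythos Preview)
Your overall plan mirrors the paper's strategy (pointwise identity on $M_u$, then a cutoff to reach general $\phi$), and your cutoff $\phi_\ve=\phi\chi_\ve$ is a reasonable variant of the paper's choice $\phi_k=\min\{\phi,k^2F(\Nabla u)^2\}$; both ultimately rely on the factorisation $\nabla^{\Nabla u}(F(\Nabla u)^2)=2F(\Nabla u)\nabla^{\Nabla u}(F(\Nabla u))$ to cancel the $1/\ve$ (resp.\ $k^2$) in the cross term. However, there is a genuine gap in how you handle the regularity of $u$.

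You assert that the pointwise identity~\eqref{eq:BW} ``persists a.e.\ on $M_u$'' for $u\in H^2_{\loc}\cap\cC^1$ because ``the derivation in Theorem~\ref{th:pwBW} only invokes second-order weak derivatives of $u$''. This is not correct: the left-hand side $\Delta^{\Nabla u}(F(\Nabla u)^2/2)=\div_m\!\big(\nabla^{\Nabla u}(F(\Nabla u)^2/2)\big)$ already contains second derivatives of $F(\Nabla u)^2$, hence \emph{third} derivatives of $u$; likewise $D(\Lap u)(\Nabla u)$. For $u\in H^2_{\loc}$ these objects do not exist pointwise, so there is no a.e.\ identity on $M_u$ to integrate. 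Your phrase ``a local $\cC^\infty$-approximation on $M_u$ yields the identity in the limit'' hides exactly the nontrivial step: one must pass to the limit in the \emph{integrated} formula, and this requires first integrating by parts to put every term into a form that is continuous under $H^2$-convergence. That is precisely what the paper does in its Part~(II): it rewrites $\int_M\phi\,D(\Lap u_k)(\Nabla u_k)\,dm=-\int_M\div_m(\phi\Nabla u_k)\,\Lap u_k\,dm$ before letting $u_k\to u$ in $H^2_{\loc}$. Without this rewriting the convergence is not justified, and with only $H^2$-regularity your cutoff step also becomes delicate (you need $|\nabla^{\Nabla u}(F(\Nabla u))|^2\in L^1_{\loc}$, which is clear for smooth $u$ but not directly for $u\in H^2_{\loc}$). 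The clean fix is to follow the paper's two-stage structure: carry out your cutoff argument for $u\in\cC^\infty(M)$, and only afterwards approximate a general $u$ by smooth functions in the already-integrated (and integrated-by-parts) formula.

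A smaller point: your invocation of Lemma~\ref{lm:intBW} is slightly misdirected. Applying it ``on relatively compact subsets of $M_u$'' cannot say anything about $\{h=0\}=M\setminus M_u$, since that set is disjoint from $M_u$. What you actually need (and what suffices) is that $\int_{\{0<F(\Nabla u)^2<\ve\}}|\nabla^{\Nabla u}(F(\Nabla u))|^2\,dm\to0$, which for smooth $u$ follows by dominated convergence because the domain shrinks to the empty set; Lemma~\ref{lm:intBW} is really used in the paper to see that $\Lap u=0$ a.e.\ on $M\setminus M_u$ (applied coordinatewise to $h=\partial u/\partial x^i$) and that the extension of $\nabla^{\Nabla u}(F(\Nabla u)^2)$ by zero is consistent.
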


\begin{proof}
As the proofs are common, we consider only \eqref{eq:N-wBW} with $N<\infty$.

(I) Let us first treat the case of $u \in \cC^{\infty}(M)$.
If $\phi \in H^1_c(M_u)$, then \eqref{eq:N-wBW} follows from \eqref{eq:N-BW}
via integration by parts.

For an arbitrary bounded nonnegative function $\phi \in H^1_c(M) \cap L^{\infty}(M)$,
set
\[ \phi_k:=\min\big\{ \phi,k^2 F(\Nabla u)^2 \big\}, \qquad k \in \N. \]
Then $\phi_k \in H^1_c(M_u)$ and $\lim_{k \to \infty}\phi_k(x)=\phi(x)$ for all $x \in M_u$.
Hence we have
\begin{equation}\label{eq:phi_k}
-\int_M D\phi_k \bigg( \nabla^{\Nabla u} \bigg( \frac{F(\Nabla u)^2}{2} \bigg) \bigg) \,dm
 \ge \int_M \phi_k \bigg\{ D(\Lap u)(\Nabla u) +\Ric_N(\Nabla u)
 +\frac{(\Lap u)^2}{N} \bigg\} \,dm.
\end{equation}
In the limit as $k$ goes to infinity, the right hand side converges to
\[ \int_{M_u} \phi \bigg\{ D(\Lap u)(\Nabla u) +\Ric_N(\Nabla u)
 +\frac{(\Lap u)^2}{N} \bigg\} \,dm. \]
The first two terms of the integrand obviously vanish on the set $M \setminus M_u$.
In order to see that also the third term vanishes, let us fix a local coordinate
$(x^i)_{i=1}^n$ and apply Lemma~\ref{lm:intBW} to the function
$h:=\del u/\del x^i \in H^1_{\loc}(M)$.
It implies that $\del h/\del x^j=0$ a.e.\ on $h^{-1}(0)$ for all $j=1,2,\ldots,n$.
In other words, $\del^2 u/\del x^i \del x^j=0$ for all $i,j=1,2,\ldots,n$ on
$\bigcap_{i=1}^n \{\del u/\del x^i=0\}$.
Hence, in particular, $\Lap u=0$ a.e.\ on $M \setminus M_u$.

To see the passage to the limit on the left hand side of \eqref{eq:phi_k},
we observe from Lemma~\ref{lm:intBW} that
$\nabla^{\Nabla u}(F(\Nabla u)^2)=2F(\Nabla u) \nabla^{\Nabla u}(F(\Nabla u))$
vanishes a.e.\ on $M \setminus M_u$.
Thus, putting
\[ \Omega_k:=\big\{ x \in M_u \,\big|\, \phi(x)>k^2F\big( \Nabla u(x) \big)^2 \big\}
 =\{ x \in M_u \,|\, \phi(x) \neq \phi_k(x) \}, \]
we find
\begin{align*}
&\bigg| \int_M D(\phi-\phi_k)
 \Big( \nabla^{\Nabla u} \big( F(\Nabla u)^2 \big) \Big) \,dm \bigg| \\
&\le \bigg| \int_{\Omega_k} D\phi
 \Big( \nabla^{\Nabla u} \big( F(\Nabla u)^2 \big) \Big) \,dm \bigg|
 +k^2 \int_{\Omega_k} D\big( F(\Nabla u)^2 \big)
 \Big( \nabla^{\Nabla u} \big( F(\Nabla u)^2 \big) \Big) \,dm.
\end{align*}
The first term of the right hand side tends to zero since $\Omega_k$
decreases to a null set as $k$ goes to infinity.
For the second term, note that
\[ D\big( F(\Nabla u)^2 \big)
 \Big( \nabla^{\Nabla u} \big( F(\Nabla u)^2 \big) \Big)
 = 4F(\Nabla u)^2 \cdot D\big( F(\Nabla u) \big)
 \Big( \nabla^{\Nabla u} \big( F(\Nabla u) \big) \Big) \]
on $M_u$.
Therefore, by the choice of $\Omega_k$, we obtain
\begin{align*}
k^2 \int_{\Omega_k} D\big( F(\Nabla u)^2 \big)
 \Big( \nabla^{\Nabla u} \big( F(\Nabla u)^2 \big) \Big) \,dm
&\le 4\int_{\Omega_k} \phi \cdot D\big( F(\Nabla u) \big)
 \Big( \nabla^{\Nabla u} \big( F(\Nabla u) \big) \Big) \,dm \\
&\to 0 \qquad (k \to \infty).
\end{align*}
Hence, as $k$ goes to infinity, the left hand side of \eqref{eq:phi_k} converges to
\[ -\int_M D\phi \bigg( \nabla^{\Nabla u} \bigg( \frac{F(\Nabla u)^2}{2} \bigg) \bigg) \,dm. \]
This proves the claim for $u \in \cC^{\infty}(M)$.

(II) Now let us consider the general case of $u \in H^2_{\loc}(M) \cap \cC^1(M)$
with $\Lap u \in H^1_{\loc}(M)$.
Choose $u_k \in \cC^{\infty}(M)$ with $u_k \to u$ locally in the $H^2$-norm as $k \to \infty$.
Using integration by parts, we rewrite \eqref{eq:N-wBW} for the function $u_k$ in the form
\begin{align*}
&-\int_M D\phi\bigg( \nabla^{\Nabla u_k} \bigg( \frac{F(\Nabla u_k)^2}{2} \bigg) \bigg)
 \,dm \\
&\ge -\int_M \div_m(\phi \Nabla u_k) \Lap u_k \,dm
 +\int_M \phi \bigg\{ \Ric_N(\Nabla u_k) +\frac{(\Lap u_k)^2}{N} \bigg\} \,dm.
\end{align*}
As $k$ goes to infinity, each of the terms appearing in this inequality converges towards
the repective term with $u$ in the place of $u_k$.
This finally proves the claim for general $u$.
$\qedd$
\end{proof}

\section{Applications}\label{sc:appl}

In linear semigroup theory, the estimates as in Theorems~\ref{th:pwBW}, \ref{th:intBW}
are also called the \emph{$\Gamma_2$-inequality}
or the \emph{Bakry-\'Emery curvature-dimension condition} (after \cite{BE}).
Applying it to solutions to the heat equation, we obtain Bakry-\'Emery
and Li-Yau type estimates (for $N=\infty$ and $N<\infty$, respectively).
We remark that Theorem~\ref{th:hf} ensures that solutions to the heat equation
enjoy enough regularity for applying Theorem~\ref{th:intBW}.
Throughout the section, we assume the compactness of $M$.
We will consider only global solutions $u:[0,T] \times M \lra \R$
to the heat equation (more precisely, their continuous versions on $(0,T] \times M$)
for simplicity.

Given a global solution $u:[0,T] \times M \lra \R$ to the heat equation,
let us fix a measurable one-parameter family of non-vanishing vector fields
$\{V_t\}_{t \in [0,T]}$ on $M$ such that $V_t=\Nabla u_t$ on $M_{u_t}$.
For $0 \le s<t \le T$, denote by $P^{\Nabla u}_{s,t}$
the (linear, symmetric) Markov transition operator on $L^2(M,m)$
associated with the time-dependent generator $\Delta^V$.
Precisely, for each $h \in L^2(M,m)$, $h_t=P^{\Nabla u}_{s,t} h $ for $t \in [s,T]$
is the weak solution to $\del h_t/\del t =\Delta^{V_t} h_t$ with $h_s=h$.

\begin{theorem}[Bakry-\'Emery gradient estimate]\label{th:BE}
Let $(M,F,m)$ be compact and satisfy $\Ric_{\infty} \ge K$ for some $K \in \R$,
and let $u:[0,T] \times M \lra \R$ be a global solution to the heat equation.
Then we have
\begin{equation}\label{eq:BE}
F\big( \Nabla u_t(x) \big)^2 \le e^{-2K(t-s)} P_{s,t}^{\Nabla u} \big( F(\Nabla u_s)^2 \big)(x)
\end{equation}
for all $0 \le s<t \le T$ and $x \in M$.
\end{theorem}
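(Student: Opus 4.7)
The plan is to adapt the classical Bakry-\'Emery interpolation argument, with the integrated Bochner-Weitzenb\"ock formula (Theorem~\ref{th:intBW}) replacing the smooth pointwise Bochner identity. Fix $0 \le s < t \le T$ and a nonnegative $\phi \in \cC_c^{\infty}(M)$, and introduce
\[
\psi(r) := P_{r,t}^{\Nabla u}\big( F(\Nabla u_r)^2 \big), \qquad r \in [s,t],
\]
whose endpoint values are exactly the two sides of \eqref{eq:BE}: $\psi(t) = F(\Nabla u_t)^2$ and $\psi(s) = P_{s,t}^{\Nabla u}(F(\Nabla u_s)^2)$. Using the self-adjointness of each $\Delta^{V_r}$ on $L^2(M,m)$, the dual propagation $\tilde\phi_r := (P_{r,t}^{\Nabla u})^*\phi$ satisfies the backward equation $\partial_r\tilde\phi_r = -\Delta^{V_r}\tilde\phi_r$ with terminal datum $\tilde\phi_t = \phi$ and is nonnegative by the Markov property. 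Thus it suffices to show that $r \mapsto e^{2Kr}\int_M F(\Nabla u_r)^2\,\tilde\phi_r\,dm$ is non-increasing on $[s,t]$ for every such $\phi$; concentration of $\phi$ at $x$ then yields \eqref{eq:BE}.

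To compute the $r$-derivative, I would combine three ingredients. First, a chain rule through the Legendre transform together with $\partial_r u_r = \Lap u_r$ yields $\partial_r[F(\Nabla u_r)^2/2] = D(\Lap u_r)(\Nabla u_r)$. Second, the defining integration by parts for $\Delta^{V_r}$ rewrites $-\int_M F(\Nabla u_r)^2\,\Delta^{V_r}\tilde\phi_r\,dm$ as $\int_M D\tilde\phi_r\big(\nabla^{\Nabla u_r}[F(\Nabla u_r)^2]\big)\,dm$; crucially, Lemma~\ref{lm:intBW} forces $\nabla^{V_r}[F(\Nabla u_r)^2]$ to vanish a.e.\ on $M \setminus M_{u_r}$, so the arbitrary extension $V_r$ drops out and may be replaced by $\Nabla u_r$. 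Third, the integrated Bochner-Weitzenb\"ock formula (Theorem~\ref{th:intBW}) applied to $u_r$---whose regularity $H^2_{\loc}\cap\cC^1$ with $\Lap u_r \in H^1_{\loc}$ is supplied by Theorem~\ref{th:hf}---and tested against the admissible $\tilde\phi_r \in H^1(M)\cap L^\infty(M)$ transforms that last integral. After the two $D(\Lap u_r)(\Nabla u_r)$ contributions cancel, the net outcome is
\[
\frac{d}{dr}\int_M F(\Nabla u_r)^2\,\tilde\phi_r\,dm = -2\int_M \tilde\phi_r\big\{\Ric_\infty(\Nabla u_r) + \|\Nabla^2 u_r\|_{HS(\Nabla u_r)}^2\big\}\,dm.
\]

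Discarding the nonnegative Hessian term and using $\Ric_\infty \ge K$ together with $\tilde\phi_r \ge 0$ bounds the right-hand side by $-2K\int_M F(\Nabla u_r)^2\,\tilde\phi_r\,dm$, which is the desired monotonicity. The principal obstacle throughout is justifying the differentiation step under the limited regularity available: the heat flow is only $\cC^{1,\alpha}$ in space-time (Theorem~\ref{th:hf}), the linearization $\Delta^{V_r}$ depends merely measurably on $r$, and the Finsler gradient $\Nabla u_r$ degenerates on the a priori unknown set $M \setminus M_{u_r}$ where only the auxiliary $V_r$ is meaningful. The scheme succeeds precisely because Theorem~\ref{th:intBW} was stated in weak form so as to absorb the first two obstacles, while Lemma~\ref{lm:intBW} localizes every non-trivial term to $M_{u_r}$, where $V_r = \Nabla u_r$ and the choice of extension is immaterial.
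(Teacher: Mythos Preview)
Your proposal is correct and follows essentially the same route as the paper. Both arguments fix the terminal time $t$, pair $F(\Nabla u_r)^2$ against a nonnegative test function propagated by the linearized flow, differentiate the resulting integral in $r$, and invoke the integrated Bochner--Weitzenb\"ock formula (Theorem~\ref{th:intBW}) together with Lemma~\ref{lm:intBW} to control the derivative; the paper simply writes $P_{s,t}^{\Nabla u}h$ where you write the adjoint $\tilde\phi_r=(P_{r,t}^{\Nabla u})^*\phi$, and passes from the integrated inequality to the pointwise one via H\"older continuity rather than concentration of $\phi$.
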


\begin{proof}
For fixed $t \in (0,T]$, $u$ as above and for an arbitrary nonnegative function $h \in \cC(M)$,
we set
\[ H(s):=e^{2Ks} \int_M P_{s,t}^{\Nabla u}h \cdot \frac{F(\Nabla u_s)^2}{2} \,dm,
 \qquad 0 \le s \le t. \]
Then we deduce from the definition of $P_{s,t}^{\Nabla u}$ that
\begin{align*}
H'(s) &=e^{2Ks} \int_M D\bigg( \frac{F(\Nabla u_s)^2}{2} \bigg)
 \big( \nabla^{\Nabla u_s}(P_{s,t}^{\Nabla u}h) \big) \,dm \\
&\quad +e^{2Ks} \int_M P_{s,t}^{\Nabla u}h \cdot
 \frac{\del}{\del s} \bigg( \frac{F(\Nabla u_s)^2}{2} \bigg) \,dm
 +2KH(s).
\end{align*}
Note that, thanks to Lemma~\ref{lm:intBW}, the integrals
in the right hand side are well-defined.
By \eqref{eq:f1f2}, the first term in the right hand side coincides with
\[ e^{2Ks} \int_M D(P_{s,t}^{\Nabla u}h)
 \bigg( \nabla^{\Nabla u_s} \bigg( \frac{F(\Nabla u_s)^2}{2} \bigg) \bigg) \,dm. \]
The other terms are calculated as, with the help of Theorem~\ref{th:Euler},
\begin{align*}
&e^{2Ks} \int_M P_{s,t}^{\Nabla u}h \bigg\{
 D\bigg( \frac{\del u_s}{\del s} \bigg) (\Nabla u_s) +KF(\Nabla u_s)^2 \bigg\} \,dm \\
&= e^{2Ks} \int_M P_{s,t}^{\Nabla u}h \bigg\{ D(\Lap u_s) (\Nabla u_s)
 +KF(\Nabla u_s)^2 \bigg\} \,dm.
\end{align*}
To be precise, we used the positive $0$-homogeneity of $g_{ij}$ to see, on $M_{u_s}$,
\begin{align}
\frac{\del}{\del s} \bigg( \frac{F(\Nabla u_s)^2}{2} \bigg)
&= \frac{\del}{\del s} \bigg[ \frac{1}{2} \sum_{i,j=1}^n
 g^{ij}(\Nabla u_s) \frac{\del u_s}{\del x^i} \frac{\del u_s}{\del x^j} \bigg]
 \nonumber\\
&= \sum_{i,j=1}^n g^{ij}(\Nabla u_s) \frac{\del u_s}{\del x^i}
 \frac{\del^2 u_s}{\del s \del x^j}
 +\frac{1}{2} \sum_{i,j,k=1}^n \frac{\del g^{ij}}{\del v^k}(\Nabla u_s)
 \frac{\del (\Nabla u_s)_k}{\del s} \frac{\del u_s}{\del x^i}
 \frac{\del u_s}{\del x^j} \nonumber\\
&= D\bigg( \frac{\del u_s}{\del s} \bigg)(\Nabla u_s)
 -\frac{1}{2} \sum_{i,j,k} \frac{\del g_{ij}}{\del v^k}(\Nabla u_s)
 (\Nabla u_s)_i (\Nabla u_s)_j \frac{\del (\Nabla u_s)_k}{\del s} \nonumber\\
&= D\bigg( \frac{\del u_s}{\del s} \bigg)(\Nabla u_s), \qquad
 \textrm{where}\ \Nabla u_s=\sum_{i=1}^n (\Nabla u)_i \frac{\del}{\del x^i}.
 \label{eq:BE1}
\end{align}
Therefore we have $H'(s) \le 0$ by the weak formulation of the
Bochner-Weitzenb\"ock formula
\eqref{eq:N-wBW} (for $\phi=P_{s,t}^{\Nabla u}h$ and $N=\infty$) together with
the hypothesis $\Ric_{\infty} \ge K$.

Thus $H$ is non-increasing and, in particular,
\begin{align*}
e^{2Kt} \int_M h \, \frac{F(\Nabla u_t)^2}{2} \,dm
&\le e^{2Ks} \int_M P_{s,t}^{\Nabla u}h \cdot \frac{F(\Nabla u_s)^2}{2} \,dm \\
&= e^{2Ks} \int_M h \cdot P_{s,t}^{\Nabla u} \bigg( \frac{F(\Nabla u_s)^2}{2} \bigg) \,dm.
\end{align*}
Since this holds true for any nonnegative $h$, we obtain the claimed inequality
\eqref{eq:BE} for a.e.\ $x \in M$.
The H\"older continuity in $x$ of both sides of \eqref{eq:BE} finally
allows to deduce it for every $x \in M$.
$\qedd$
\end{proof}

We stress that it is used in \eqref{eq:BE} the mixture of the nonlinear operator
$u_s \longmapsto u_t$ and the linear one $P_{s,t}^{\Nabla u}$.
As an immediate corollary to Theorem~\ref{th:BE}, we obtain a growth bound
for Lipschitz constants.
For a continuous function $u \in \cC(M)$, define
\[ \Lip(u):=\sup_{x,y \in M} \frac{f(y)-f(x)}{d(x,y)}. \]

\begin{corollary}\label{cr:BE}
Assume that $(M,F,m)$ is compact and satisfies $\Ric_{\infty} \ge K$ for some $K \in \R$,
and let $u:[0,T] \times M \lra \R$ be a global solution to the heat equation.
Then we have
\[ \| F(\Nabla u_t) \|_{L^{\infty}} \le e^{-Kt} \| F(\Nabla u_0) \|_{L^{\infty}} \]
and, if $u_0 \in \cC(M)$,
\[ \Lip(u_t) \le e^{-Kt} \Lip(u_0) \]
for all $t \in [0,T]$.
\end{corollary}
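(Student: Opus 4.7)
The strategy is to read off both bounds from the pointwise Bakry–Émery estimate \eqref{eq:BE} with $s=0$, using only two elementary ingredients: the $L^\infty$-contractivity of the linear Markov operator $P_{0,t}^{\Nabla u}$, and the standard Finsler identity linking the Lipschitz constant to the dual norm of the differential.

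\textbf{Step 1: $L^\infty$-bound on $F(\Nabla u_t)$.} Set $s=0$ in Theorem~\ref{th:BE}. For every $x \in M$,
\[ F\big(\Nabla u_t(x)\big)^2 \le e^{-2Kt}\, P_{0,t}^{\Nabla u}\!\big(F(\Nabla u_0)^2\big)(x). \]
Since $P_{0,t}^{\Nabla u}$ is a symmetric Markov transition operator, it is contractive on $L^\infty(M,m)$, so $P_{0,t}^{\Nabla u}(F(\Nabla u_0)^2)(x) \le \|F(\Nabla u_0)\|_{L^\infty}^2$. Taking square roots and passing to the supremum over $x$ yields the first inequality. If $\|F(\Nabla u_0)\|_{L^\infty}=\infty$, the claim is vacuous.

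\textbf{Step 2: reduction of the Lipschitz bound to Step 1.} Assume $\Lip(u_0)<\infty$ (otherwise nothing to show). By a Rademacher-type theorem on the compact Finsler manifold $(M,F)$, the Lipschitz function $u_0$ is differentiable $m$-almost everywhere, and at every such point the inequality $F(\Nabla u_0) = F^*(Du_0) \le \Lip(u_0)$ holds. Hence $\|F(\Nabla u_0)\|_{L^\infty}\le \Lip(u_0)$, so Step~1 already gives $\|F(\Nabla u_t)\|_{L^\infty}\le e^{-Kt}\Lip(u_0)$.

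\textbf{Step 3: from $\|F(\Nabla u_t)\|_{L^\infty}$ back to $\Lip(u_t)$.} For $t>0$, Theorem~\ref{th:hf}(ii) gives $u_t \in \cC^{1,\alpha}(M)$, so $F(\Nabla u_t)$ is defined pointwise. For any $x,y \in M$ and any $\cC^1$-curve $\eta:[0,1]\to M$ from $x$ to $y$,
\[ u_t(y)-u_t(x)=\int_0^1 Du_t\big(\dot\eta(s)\big)\,ds \le \int_0^1 F^*\big(Du_t(\eta(s))\big)\cdot F\big(\dot\eta(s)\big)\,ds \le \|F(\Nabla u_t)\|_{L^\infty}\int_0^1 F(\dot\eta)\,ds, \]
using the dual-norm inequality $\alpha(v)\le F^*(\alpha)F(v)$ and the identity $F^*(Du_t)=F(\Nabla u_t)$ coming from the Legendre transform. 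Taking the infimum over admissible $\eta$ gives $u_t(y)-u_t(x) \le \|F(\Nabla u_t)\|_{L^\infty}\cdot d(x,y)$, hence $\Lip(u_t)\le \|F(\Nabla u_t)\|_{L^\infty}$. Chaining this with Step~2 yields the second inequality.

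\textbf{Expected difficulty.} None of the steps involves a hard estimate; the arguments are essentially bookkeeping once Theorem~\ref{th:BE} is available. The only mild subtlety is being careful with the nonreversibility of $F$ in Step~3, which is why the one-sided duality $\alpha(v)\le F^*(\alpha)F(v)$ (rather than an equality) is the right tool, and why the asymmetric definition $\Lip(u):=\sup(f(y)-f(x))/d(x,y)$ naturally pairs with the forward Finsler norm $F(\Nabla u_t)$.
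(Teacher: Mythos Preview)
Your argument is correct and is precisely the approach the paper has in mind: the corollary is stated without proof, introduced only as ``an immediate corollary to Theorem~\ref{th:BE}'', and your three steps (Markov $L^\infty$-contraction of $P_{0,t}^{\Nabla u}$, the a.e.\ bound $F^*(Du_0)\le\Lip(u_0)$, and integration along curves using $Du_t(v)\le F^*(Du_t)F(v)$) are exactly the routine unpacking of that word ``immediate''. Your care with the asymmetry of $F$ in Step~3 is appropriate and matches the paper's one-sided definition of $\Lip$.
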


\begin{remark}\label{rm:BE}
In the Riemannian case, each of the properties in Corollary~\ref{cr:BE} characterizes
manifolds with $\Ric_{\infty} \ge K$ (\cite[Theorem~1.3]{vRS}),
and $\Ric_{\infty} \ge K$ is also equivalent to the $K$-contraction property of heat flow
in the $L^2$-Wasserstein space (\cite[Corollary~1.4]{vRS}, see \eqref{eq:intr1}).
The equivalence between the gradient estimate and the contraction property
is known to hold for more general linear semigroups by Kuwada's duality~\cite{Ku}.
Comparing Theorem~\ref{th:BE} and Corollary~\ref{cr:BE} with
the non-contraction property of heat flow established in \cite{OS2},
we observe that a completely different phenomenon occurs for nonlinear semigroups.
\end{remark}

Next we consider Li-Yau type estimates for $N<\infty$ (see \cite{LY} and \cite{Da}).

\begin{theorem}[Li-Yau gradient estimate]\label{th:LY}
Let us assume that $(M,F,m)$ is compact and satisfies $\Ric_N \ge K$
for some $N \in [n,\infty)$ and $K \in \R$,
put $K':=\min\{ K,0 \}$ and take a positive global solution
$u:[0,T] \times M \lra \R$ to the heat equation.
Then we have
\[ F\big( \Nabla (\log u)(t,x) \big)^2 -\theta \del_t(\log u)(t,x)
 \le N\theta^2 \bigg( \frac{1}{2t}-\frac{K'}{4(\theta -1)} \bigg) \]
on $(0,T] \times M$ for any $\theta>1$.
\end{theorem}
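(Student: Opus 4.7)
The plan is to apply the dimensional Bochner-Weitzenb\"ock formula \eqref{eq:N-wBW} to $f:=\log u$ and to conclude via a semigroup-monotonicity argument analogous to the proof of Theorem~\ref{th:BE}. Since $u>0$ on the compact $M$, $f$ is well-defined and inherits the $H^2_{\loc}\cap\cC^{1,\alpha}$-regularity of $u$ from Theorem~\ref{th:hf}. The positive $1$-homogeneity of $\cL^*$ gives $\Nabla f=\Nabla u/u$, so $M_f=M_u$; and the positive $0$-homogeneity of $g_{ij}$ gives $\Delta^{\Nabla f}=\Delta^{\Nabla u}$. A chain-rule computation then yields the basic nonlinear diffusion equation
\[ \partial_t f=\Lap f+F(\Nabla f)^2. \]

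Next, I would apply \eqref{eq:N-wBW} to $f$ and, using the identity $\partial_s F(\Nabla f)^2=2D(\partial_s f)(\Nabla f)$ derived as in \eqref{eq:BE1} together with $\Lap f=\partial_s f-F(\Nabla f)^2$, obtain for the Li-Yau quantity
\[ G(s,\cdot):=F(\Nabla f_s)^2-\theta\,\partial_s f_s=(1-\theta)F(\Nabla f_s)^2-\theta\Lap f_s \]
a weak parabolic differential inequality of the schematic form
\[ (\partial_s-\Delta^{\Nabla f_s})G\le 2\,DG(\Nabla f_s)-\frac{2}{N\theta^2}G^2+\Xi\bigl(F(\Nabla f_s)^2\bigr), \]
where $\Xi$ is a quadratic in $F(\Nabla f_s)^2$ produced by a Young-type completion of the dimensional square $-2(\Lap f)^2/N$ from \eqref{eq:N-wBW} against the curvature term $-2KF(\Nabla f_s)^2$. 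Only the nonpositive part $K'=\min\{K,0\}$ of $K$ contributes unfavourably in this rearrangement, and the specific Young parameter (of order $\theta-1$) produces the denominator $\theta-1$ in the target bound.

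To finish, following the pattern of the proof of Theorem~\ref{th:BE}, fix $t_0\in(0,T]$ and a nonnegative $h\in\cC(M)$ and consider
\[ H(s):=s\int_M(P^{\Nabla u}_{s,t_0}h)\,G(s,\cdot)\,dm+\gamma(s)\int_M h\,dm,\qquad s\in(0,t_0], \]
for a scalar $\gamma$ determined by the ODE imposed by the parabolic inequality above, with explicit boundary values $\gamma(0^+)-\gamma(t_0)=N\theta^2\bigl(\tfrac12-\tfrac{K't_0}{4(\theta-1)}\bigr)$. Differentiating $H$ in $s$ using the backward Kolmogorov identity $\partial_sP^{\Nabla u}_{s,t_0}h=-\Delta^{\Nabla u_s}P^{\Nabla u}_{s,t_0}h$, the self-adjointness of $P^{\Nabla u}_{s,t_0}$, the transport absorption \eqref{eq:f1f2}, and a Jensen/AM-GM estimate to dominate the quadratic term $\int(P^{\Nabla u}_{s,t_0}h)\,G^2\,dm$, the choice of $\gamma$ renders $H$ non-increasing in $s$. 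Since one can verify (by first working on $[\ve,t_0]$ and then letting $\ve\downarrow0$) that $\lim_{s\to 0^+}s\int(P^{\Nabla u}_{s,t_0}h)\,G(s,\cdot)\,dm=0$, the comparison $H(t_0)\le H(0^+)$ combined with the arbitrariness of $h\ge0$ and the $\cC^{1,\alpha}$-continuity of $G$ yields the claimed pointwise bound.

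The main obstacle is the combination of nonlinearity and limited regularity. Since \eqref{eq:N-wBW} is only a distributional inequality, every step must be performed against test functions, with Lemma~\ref{lm:intBW} invoked to dispose of contributions supported on $M\setminus M_u$, where $F(\Nabla f)^2$ and $\Nabla^2 f$ are undefined. A further subtlety is that the commutator between $\partial_s$ and the linearized operator $\Delta^{\Nabla u_s}$ does not vanish naively; it is controlled via the identity $\partial_s\Nabla u_s=\nabla^{\Nabla u_s}\partial_s u_s$ on $M_{u_s}$, which follows from $\Nabla=\cL^*\circ D$ and the smoothness of $\cL^*$ on $T^*M\setminus 0$. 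Finally, tracking the exact constants $\theta-1$ and $K'$ through the Young-type rearrangement of Bochner, and verifying the boundary behavior $s\downarrow 0^+$ of $H$, are the remaining delicate computational points.
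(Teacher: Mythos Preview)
Your strategy and the paper's diverge at the decisive step. The paper argues by a \emph{maximum principle}: with $f=\log u$ it sets $\alpha(t,x):=t\bigl(F(\Nabla f)^2-\theta\,\partial_t f\bigr)$, derives the distributional inequality
\[
\Delta^{V}\alpha+2D\alpha(\Nabla f)-\partial_t\alpha\ \ge\ \beta
\]
with an explicit continuous $\beta$, locates a maximizer $(s,x)$ of $\alpha$ on the compact set $[0,T]\times M$, and observes that a strict sub-solution cannot attain a parabolic interior maximum, forcing $\beta(s,x)\le 0$. This last condition is a purely algebraic inequality in the numbers $\alpha(s,x)$ and $F(\Nabla f(s,x))^2$, and is resolved following Davies to produce the stated bound. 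No semigroup enters.

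Your semigroup-monotonicity route is a legitimate alternative idea in the Riemannian literature, but as written it has a genuine gap at the drift term. The parabolic inequality you obtain for $G$ contains $2DG(\Nabla f_s)$, so $G$ is a sub-solution of the operator $\partial_s-\Delta^{\Nabla u_s}-2D(\,\cdot\,)(\Nabla f_s)$, \emph{not} of $\partial_s-\Delta^{\Nabla u_s}$. The propagator $P^{\Nabla u}_{s,t_0}$ you invoke is generated only by $\Delta^{\Nabla u_s}$, and \eqref{eq:f1f2} merely rewrites $DG(\Nabla f_s)=Df_s(\nabla^{\Nabla f_s}G)$ without any cancellation or sign; after differentiating $H$ you are left with the uncontrolled term $2s\int_M(P^{\Nabla u}_{s,t_0}h)\,DG(\Nabla f_s)\,dm$. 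Known semigroup proofs of Li--Yau absorb this drift by carrying the weight $u_s$, via the identity $u\bigl(\Delta^{\Nabla u}\psi+2D\psi(\Nabla\log u)\bigr)=\Delta^{\Nabla u}(u\psi)-\psi\,\Lap u$, which your $H$ lacks. A second obstruction is that your residual $\Xi\bigl(F(\Nabla f_s)^2\bigr)$ and the cross terms coming from the expansion of $(\Lap f)^2/N$ still involve $F(\Nabla f_s)^2$ on its own, not through $G$; the paper disposes of these \emph{pointwise at the maximizer}, where the Davies optimization over $F(\Nabla f)^2\ge 0$ applies, whereas in your integrated scheme there is no evident way to close the estimate.
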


\begin{proof}
We follow the argumentation in \cite[Lemmas~5.3.2, 5.3.3]{Da} and focus
on the modifications required because of nonlinearity and
the lack of higher order regularity.
Throughout the proof, we fix a measurable one-parameter family of
non-vanishing vector fields $\{V_t\}_{t \in [0,T]}$ as described
in the paragraph preceding to Theorem~\ref{th:BE}.
We divide the proof into five steps.
Note that it suffices to show the claim for $t=T$.

(I)
First consider the function $f:=\log u$ which is $H^2$ in space and $\cC^{1,\alpha}$
in both space and time.
As $u$ solves the heat equation, $f$ satisfies
\begin{equation}\label{eq:LY1}
\Lap f_t+F(\Nabla f_t)^2=\del_t f_t
\end{equation}
for every $t$ in the weak sense that
\[ \int_M \{ -D\phi(\Nabla f_t)+\phi F(\Nabla f_t)^2 \} \,dm
 =\int_M \phi \, \del_t f_t \,dm \]
for each $\phi \in H^1(M)$.
Note also that $g_{\Nabla f_t}=g_{\Nabla u_t}$ a.e.\ on $M_{u_t}$ and
$\Lap f_t \in H^1(M)$ for each $t>0$ since $u$ is positive.

(II)
Next we verify that the function $\sigma(t,x):=t \del_t f(t,x)$
($\in H^1(M)$ by Theorem~\ref{th:hf}) satisfies
\begin{equation}\label{eq:LY2}
\Delta^V \sigma -\del_t \sigma +\frac{\sigma}{t} +2D\sigma(\Nabla f)=0
\end{equation}
in the sense of distributions on $(0,T) \times M$.
Indeed, for each $\phi \in H^1_0((0,T) \times M)$,
\begin{align*}
&\int_0^T \int_M \bigg\{ -D\phi(\nabla^V \sigma) +\del_t \phi \cdot \sigma
 +\frac{\phi \sigma}{t} +2\phi D\sigma(\Nabla f) \bigg\} \,dmdt \\
&= \int_0^T \int_M \big\{ -D(t\phi) \big( \nabla^V (\del_t f) \big)
 +\del_t(t\phi) \del_t f +2t\phi D(\del_t f)(\Nabla f) \big\} \,dmdt \\
&= \int_0^T \int_M \big\{ D\big(\del_t (t\phi) \big)(\Nabla f)
 +\del_t(t\phi) \big( \Lap f+F(\Nabla f)^2 \big) +t\phi \del_t\big( F(\Nabla f)^2 \big) \big\} \,dmdt \\
&= 0,
\end{align*}
where we used \eqref{eq:LY1} and \eqref{eq:BE1} in the second equality.
To be precise, a calculation similar to \eqref{eq:BE1} ensures
\[ \del_t \big( D(t\phi)(\Nabla f) \big)
 =D\big( \del_t(t\phi) \big)(\Nabla f) +D(t\phi) \big( \nabla^V (\del_t f) \big). \]

(III)
Now let us consider the function $\alpha(t,x):=t\{ F(\Nabla f(t,x))^2 -\theta \del_t f(t,x) \}$.
It lies in $H^1(M)$ for each $t$ and is H\"older continuous in both space and time.
Using the previous identity \eqref{eq:LY2} for $\sigma$ as well as our version of
the Bochner-Weitzenb\"ock formula, we shall deduce that
\begin{equation}\label{eq:LY3}
\Delta^V \alpha +2D\alpha(\Nabla f) -\del_t \alpha \ge \beta
\end{equation}
again in the distributional sense on $(0,T) \times M$,
where $\beta$ denotes the continuous function
\[ \beta(t,x):=-\frac{\alpha}{t} +2t \bigg\{ \frac{1}{N}\big(F(\Nabla f)^2 -\del_t f \big)^2
 +K F(\Nabla f)^2 \bigg\}. \]
Indeed, for each nonnegative $\phi \in H^1_0((0,T) \times M)$,
\eqref{eq:LY2}, \eqref{eq:BE1} and \eqref{eq:LY1} show
\begin{align*}
&\int_0^T \int_M \bigg\{ -D\phi(\nabla^V \alpha) +2\phi D\alpha(\Nabla f)
 +\del_t \phi \cdot \alpha +\frac{\phi \alpha}{t} \bigg\} \,dmdt \\
&= \int_0^T \int_M \Big\{ -tD\phi \Big( \nabla^{\Nabla u} \big( F(\Nabla f)^2 \big) \Big)
 +2t\phi D\big( F(\Nabla f)^2 \big)(\Nabla f) \\
&\qquad -\phi \big\{ F(\Nabla f)^2 +2tD(\del_t f)(\Nabla f) \big\}
 +\phi F(\Nabla f)^2 \Big\} \,dmdt \\
&= \int_0^T \int_M \Big\{ -tD\phi \Big( \nabla^{\Nabla u} \big( F(\Nabla f)^2 \big) \Big)
 -2t\phi D(\Lap f)(\Nabla f) \Big\} \,dmdt.
\end{align*}
Thanks to \eqref{eq:N-wBW} and the hypothesis $\Ric_N \ge K$, this is estimated from below by
\[ 2t \int_0^T \int_M \phi \bigg\{ K F(\Nabla f)^2 +\frac{(\Lap f)^2}{N} \bigg\} \,dmdt \]
and \eqref{eq:LY1} completes the proof of \eqref{eq:LY3}.

(IV)
Let $(s,x)$ denote the maximizer of $\alpha$ on $[0,T] \times M$ (note that $M$ is compact).
Without restriction, $\alpha(s,x)>0$ and thus $s>0$
(otherwise, the assertion of the theorem is obvious).
We shall claim that this implies $\beta(s,x) \le 0$.
Assume the contrary, $\beta(s,x)>0$.
It would imply $\beta>0$ on a neighborhood of $(s,x)$.
Hence, according to \eqref{eq:LY3} on such a neighborhood, the function $\alpha$ would be
a strict sub-solution to the linear parabolic operator
\[ \div_m(\nabla^V \alpha) +2D\alpha(\Nabla f) -\del_t \alpha. \]
Therefore, $\alpha(s,x)$ would be strictly less than the supremum of $\alpha$
on the boundary of any small parabolic cylinder $[s-\delta,s] \times B_{\delta}(x)$,
where $B_{\delta}(x):=\{ y \in M \,|\, d(x,y)<\delta \}$.
In particular, $\alpha$ could not be maximal at $(s,x)$.

(V)
The conclusion of the previous step is that $\beta(s,x) \le 0$.
In other words,
\[ \alpha \ge 2s^2 \bigg\{ \frac{1}{N}\big(F(\Nabla f)^2 -\del_t f \big)^2
 +K F(\Nabla f)^2 \bigg\} \]
at the maximum point $(s,x)$ of $\alpha$.
Following the reasoning of \cite[Lemma~5.3.3]{Da}, we conclude
\[ \alpha(s,x) \le \frac{N\theta^2}{2} \bigg( 1-\frac{K's}{2(\theta -1)} \bigg) \]
and thus, since $K' \le 0$,
\[ \alpha(T,y) \le \alpha(s,x) \le \frac{N\theta^2}{2} \bigg( 1-\frac{K'T}{2(\theta -1)} \bigg) \]
for all $y \in M$.
This completes the proof.
$\qedd$
\end{proof}

We proceed along the line of \cite[Theorem~5.3.5]{Da} and obtain
the Harnack inequality.

\begin{theorem}[Li-Yau Harnack inequality]\label{th:LYH}
Assume that $(M,F,m)$ is compact and satisfies $\Ric_N \ge K$ for some $N \in [n,\infty)$ and $K \in \R$,
put $K':=\min\{ K,0 \}$ and take a nonnegative global solution
$u:[0,T] \times M \lra \R$ to the heat equation.
Then we have
\[ u(s,x) \le u(t,y) \cdot \bigg( \frac{t}{s} \bigg)^{\theta N/2}
 \exp\bigg( \frac{\theta d(y,x)^2}{4(t-s)}-\frac{\theta K'N(t-s)}{4(\theta -1)} \bigg) \]
for any $\theta>1$, $0<s<t \le T$ and $x,y \in M$.
\end{theorem}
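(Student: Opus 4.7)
The plan is to derive the Harnack inequality by integrating the pointwise Li--Yau bound of Theorem~\ref{th:LY} along a minimal geodesic in space-time from $(s,x)$ to $(t,y)$, mimicking the Riemannian argument of \cite[Theorem~5.3.5]{Da} and accommodating the Finsler-specific issues of non-reversibility of $F$ and $\cC^{1,\alpha}$-regularity of $u$. As a preliminary reduction, replace $u$ by $u+\ve$ (which also solves the heat equation since $\Lap$ annihilates constants) and let $\ve\downarrow 0$ at the end, so we may assume $u>0$ on $(0,T]\times M$ and set $f:=\log u$. Positivity together with Theorem~\ref{th:hf}(ii) ensures that $f$ is $\cC^{1,\alpha}$ in both variables and $H^2_\loc$ in space, and Theorem~\ref{th:LY} provides the pointwise bound
\[ F(\Nabla f)^2 - \theta\,\del_r f \le N\theta^2\bigg(\frac{1}{2r}-\frac{K'}{4(\theta-1)}\bigg) \qquad \text{on } (0,T]\times M. \]

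Now choose a constant-speed minimal geodesic $\gamma:[s,t]\to M$ joining $x$ and $y$, oriented so that the reverse Finsler norm $F(-\dot\gamma(r))$ is the constant $d(x,y)/(t-s)$ (compactness yields both forward and backward completeness, so this choice is legal). The regularity of $f$ makes $r\mapsto f(r,\gamma(r))$ absolutely continuous with derivative $\del_r f + Df(\dot\gamma)$ almost everywhere. The defining dual-norm inequality $\alpha(w)\le F^*(\alpha)F(w)$ applied to $\alpha=Df$, $w=-\dot\gamma$ yields
\[ -Df(\dot\gamma) = Df(-\dot\gamma) \le F^*(Df)\,F(-\dot\gamma) = F(\Nabla f)\,F(-\dot\gamma). \]
Combining this with Theorem~\ref{th:LY} to control $-\del_r f$, and completing the square via $-a^2/\theta + ab \le \theta b^2/4$ applied with $a=F(\Nabla f)$ and $b=F(-\dot\gamma)$, one arrives at the scalar differential inequality
\[ -\frac{d}{dr} f\big(r,\gamma(r)\big) \le \frac{\theta F(-\dot\gamma(r))^2}{4} + \frac{N\theta}{2r} - \frac{N\theta K'}{4(\theta-1)} \]
for a.e.\ $r\in[s,t]$. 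Integrating from $s$ to $t$ and substituting $\int_s^t F(-\dot\gamma)^2\,dr = d(x,y)^2/(t-s)$ together with $\int_s^t dr/(2r) = \tfrac12\log(t/s)$ and $K'\le 0$, then exponentiating (and letting $\ve\downarrow 0$), yields the claimed Harnack estimate.

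The principal delicate point is the bookkeeping forced by the Finsler asymmetry: the duality bound intrinsically produces the reverse Finsler norm $F(-\dot\gamma)$, so the orientation of the connecting geodesic must be chosen carefully in order for the length integral to match the stated $d(x,y)^2$ in the exponent (rather than its reverse); the chain rule along $\gamma$ relies on the $\cC^{1,\alpha}$-regularity from Theorem~\ref{th:hf}(ii), while compactness of $M$ secures existence of the required minimal geodesic. Once these points are dispatched, what remains is the elementary calculus of the Davies argument, and no additional analytic input beyond Theorem~\ref{th:LY} is needed.
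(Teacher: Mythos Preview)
Your overall approach coincides with the paper's: reduce to a positive solution, set $f=\log u$, integrate the Li--Yau bound of Theorem~\ref{th:LY} along a geodesic in space-time from $(s,x)$ to $(t,y)$, complete the square, and exponentiate. The paper packages the same computation as monotonicity of $\sigma(\tau)=f(\tau,\eta(\tau))+\tfrac{\theta N}{2}\log\tau+\Theta\tau$ along a forward minimal geodesic $\eta$ from $x$ to $y$.

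Where you diverge from the paper is in the asymmetry bookkeeping, and there the proposal contains a gap. The dual-norm inequality $-Df(\dot\gamma)\le F(\Nabla f)\,F(-\dot\gamma)$ is correct, but the claim that one may choose a minimal geodesic $\gamma$ with $\gamma(s)=x$, $\gamma(t)=y$, and $F(-\dot\gamma)\equiv d(x,y)/(t-s)$ is not. The reversed curve $r\mapsto\gamma(t+s-r)$ runs from $y$ to $x$ with $F$-speed $F(-\dot\gamma)$, so its length is $\int_s^t F(-\dot\gamma)\,dr\ge d(y,x)$; the best constant you can realize is therefore $d(y,x)/(t-s)$ (by taking $\gamma$ to be the reversal of a minimal geodesic from $y$ to $x$), and for a forward minimal geodesic from $x$ to $y$ the quantity $F(-\dot\gamma)$ is not even constant in general. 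Hence your substitution $\int_s^t F(-\dot\gamma)^2\,dr=d(x,y)^2/(t-s)$ is unjustified; the argument as written produces $d(y,x)^2=\overleftarrow{d}(x,y)^2$ in the exponent. The paper sidesteps this by using the forward geodesic $\eta$ from $x$ to $y$ (so that $F(\dot\eta)=d(x,y)/(t-s)$) together with the bound $Df(\dot\eta)\ge -F(\Nabla f)\,F(\dot\eta)$, which pairs $F(\Nabla f)$ with $F(\dot\eta)$ rather than with $F(-\dot\eta)$.
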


\begin{proof}
Replacing $u$ by $u+\ve$ if necessary, we may assume without restriction
that $u$ is positive.
Let $\eta(\tau)=\exp_y((t-\tau)v)$ for $\tau \in [s,t]$ be the reverse curve of
the minimal geodesic from $y=\eta(t)$ to $x=\eta(s)$ with suitable $v \in T_yM$.
Then obviously $F(-\dot{\eta}(\tau))=d(y,x)/(t-s)$ for all $\tau$.
We also put $f:=\log u$,
\[ \Theta :=\frac{\theta d(y,x)^2}{4(t-s)^2} -\frac{\theta K'N}{4(\theta -1)}, \qquad
 \sigma(\tau) :=f \big( \tau,\eta(\tau) \big) +\frac{\theta N}{2} \log\tau +\Theta\tau. \]
Then we have, by Theorem~\ref{th:LY},
\begin{align*}
\del_{\tau} \sigma(\tau)
&= Df(\dot{\eta}) +\del_t f +\frac{\theta N}{2\tau} +\Theta \\
&\ge -F(\Nabla f) F(-\dot{\eta}) +\frac{F(\Nabla f)^2}{\theta}
 -N\theta \bigg( \frac{1}{2t}-\frac{K'}{4(\theta -1)} \bigg) +\frac{\theta N}{2\tau} +\Theta \\
&\ge -F(\Nabla f) \frac{d(y,x)}{t-s} +\frac{F(\Nabla f)^2}{\theta} +\frac{\theta d(y,x)^2}{4(t-s)^2}
 \ge 0.
\end{align*}
Hence we obtain
\[ u(s,x) \cdot s^{\theta N/2} e^{\Theta s} =e^{\sigma(s)} \le e^{\sigma(t)}
 =u(t,y) \cdot t^{\theta N/2} e^{\Theta t} \]
which proves the claim.
$\qedd$
\end{proof}
In both Theorems~\ref{th:LY} and \ref{th:LYH}, we can choose $\theta=1$ if $K'=0$.
Thus we have the following.

\begin{corollary}\label{cr:LYH}
Let $(M,m,F)$ be compact with $\Ric_N \ge 0$ for some $N \in [n,\infty)$,
and take a nonnegative global solution $u:[0,T] \times M \lra \R$
to the heat equation.
Then we have
\[ u(s,x) \le u(t,y) \cdot \bigg( \frac{t}{s} \bigg)^{N/2}
 \exp\bigg( \frac{d(y,x)^2}{4(t-s)} \bigg) \]
for any $0<s<t \le T$ and $x,y \in M$.
\end{corollary}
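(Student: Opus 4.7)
The plan is to derive Corollary~\ref{cr:LYH} directly from Theorem~\ref{th:LYH} by taking the limit $\theta \searrow 1$. Under the hypothesis $\Ric_N \ge 0$ we may apply Theorem~\ref{th:LYH} with $K = 0$, and therefore $K' = \min\{K, 0\} = 0$. With $K' = 0$, the factor $\exp(-\theta K' N(t-s)/(4(\theta-1)))$ appearing in the Harnack inequality of Theorem~\ref{th:LYH} is identically $1$, so the conclusion simplifies to
\[
u(s,x) \le u(t,y) \cdot (t/s)^{\theta N/2} \exp\bigl(\theta\, d(x,y)^2 / (4(t-s))\bigr)
\]
for every $\theta > 1$. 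Both factors on the right depend continuously on $\theta \ge 1$, so letting $\theta \searrow 1$ yields the desired estimate.

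The only point requiring attention is that the endpoint $\theta = 1$ is actually admissible, since Theorem~\ref{th:LYH} is stated for $\theta > 1$ strictly. The restriction $\theta > 1$ enters the proofs of both Theorem~\ref{th:LY} and Theorem~\ref{th:LYH} only through the denominator $\theta - 1$ multiplying $K'$; once $K' = 0$ that denominator plays no role and both arguments run unchanged at $\theta = 1$. In particular, the gradient estimate of Theorem~\ref{th:LY} reduces to $F(\Nabla(\log u))^2 - \del_t(\log u) \le N/(2t)$, and substituting this bound into the computation of Theorem~\ref{th:LYH} with $\Theta = d(x,y)^2 / (4(t-s)^2)$ gives
\[
\del_\tau \sigma(\tau) \ge \bigl(F(\Nabla f) - d(x,y)/(2(t-s))\bigr)^2 + \frac{N}{2}\Bigl(\frac{1}{\tau} - \frac{1}{t}\Bigr) \ge 0
\]
on $[s,t]$, so $\sigma(s) \le \sigma(t)$ delivers exactly the claimed Harnack bound.

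Consequently, no substantive obstacle arises: Corollary~\ref{cr:LYH} is the boundary case of Theorem~\ref{th:LYH} in which the $\theta$-dependence degenerates harmlessly, and either the direct limit $\theta \searrow 1$ or the verbatim inspection of the previous proofs at $\theta = 1$ suffices. The sharp constants in the exponential and the power of $t/s$ come from the vanishing of $K'$ making the auxiliary term drop out, rather than from any additional curvature input.
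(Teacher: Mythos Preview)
Your argument is correct and matches the paper's own reasoning: the paper simply observes, in the sentence immediately preceding the corollary, that one may take $\theta=1$ in Theorems~\ref{th:LY} and \ref{th:LYH} once $K'=0$, which is exactly your second justification, and your limit $\theta\searrow 1$ works just as well. One small slip: in your displayed bound for $\partial_\tau\sigma(\tau)$ the extra term $\tfrac{N}{2}\bigl(\tfrac{1}{\tau}-\tfrac{1}{t}\bigr)$ should not appear (the Li--Yau estimate applied at time $\tau$ contributes $-\tfrac{N}{2\tau}$, which cancels the $+\tfrac{N}{2\tau}$ from differentiating $\tfrac{N}{2}\log\tau$), but since this term is nonnegative on $[s,t]$ the conclusion is unaffected.
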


\end{document}